\documentclass[11pt]{elsarticle}
\usepackage{color}
\usepackage{graphicx,multirow}
\usepackage{amssymb,mathtools}
\usepackage{epstopdf}
\usepackage{bm}
\usepackage{float}
\usepackage{subcaption}
\usepackage{mathtools}
\usepackage[margin=1in,papersize={8.5in,11in}]{geometry}
\usepackage{times}
\usepackage{amsthm}

\newtheorem{lemma}{\bf Lemma}[section]
\newtheorem{theorem}{\bf Theorem}[section]

\def\vs{\vspace{0.2cm}}

\journal{ArXiv}

\renewenvironment{proof}{{\noindent \em Proof.}}{\begin{flushright}$\square$\end{flushright}}
\theoremstyle{definition}

\usepackage[colorlinks=true,linkcolor=blue,urlcolor=blue,bookmarksopen=true]{hyperref}

\begin{document}
\begin{frontmatter}

\title{Stability analysis of hierarchical tensor methods 
for time-dependent PDEs}

\author[ucsc]{Abram Rodgers}
\ead{akrodger@ucsc.edu}
\author[ucsc]{Daniele Venturi\corref{correspondingAuthor}}
\address[ucsc]{Department of Applied Mathematics\\
University of California Santa Cruz\\ Santa Cruz, CA 95064}
\cortext[correspondingAuthor]{Corresponding author}
\ead{venturi@ucsc.edu}

\begin{abstract} 
In this paper we address the question of whether 
it is possible to integrate time-dependent 
high-dimensional PDEs with hierarchical tensor 
methods and explicit time stepping schemes. 
To this end, we develop sufficient conditions 
for stability and convergence of tensor solutions 
evolving on tensor manifolds with constant rank.
We also argue that the applicability of PDE solvers with 
explicit time-stepping may be limited by time-step restriction 
dependent on the dimension of the problem. 
Numerical applications are presented and discussed 
for variable coefficients linear hyperbolic and parabolic PDEs. 
\end{abstract}

\end{frontmatter}

\section{Introduction}
\label{sec:intro}

Computing the solution of high-dimensional linear PDEs 
has become central to many new areas of application such 
as random media \cite{Torquato}, 
optimal transport \cite{Villani}, 
random dynamical systems \cite{Venturi_MZ,Venturi_PRS}, 
mean field games \cite{Carmona}, 
machine learning, and functional-differential 
equations \cite{venturi2018}.
In an abstract setting these problems involve the 
computation of a function $u(t,{\bf x})$ governed 
by an autonomous evolution equation
\begin{align}
  \frac{\partial u}{\partial t} = {\cal L} u ,   \qquad 
  u(0,{\bf x}) = u_0({\bf x}),\label{nonlinear-ibvp} 
\end{align}
where $u: [0,T]\times \Omega \mapsto\mathbb{R}$ 
is a $d$-dimensional (time-dependent) scalar field defined in 
the spacial domain $\Omega\subseteq \mathbb{R}^d$ ($d\geq 2$), 
and $\cal L$ is a linear operator which may 
may be dependent the spacial variables, and may 
incorporate boundary conditions.
Equation \eqref{nonlinear-ibvp} is first approximated with 
respect to the space variables, e.g., by finite differences 
\cite{Strikwerda}, pseudo-spectral methods \cite{spectral}, 
or Galerkin methods. 
To this end, let us assume for simplicity that $\Omega$ is 
a $d$-dimensional box, i.e., $\Omega=[0,1]^d$.
This allows us to transform \eqref{nonlinear-ibvp} into the system of 
ordinary differential equations 
\begin{equation}
\label{mol-ode}
\frac{d{\bf u}}{dt} = {\bf G}{\bf u}, \qquad {\bf u}(0)={\bf u}_0,
\end{equation}
where ${\bf u}:[0,T]\rightarrow
{\mathbb R}^{n_1\times n_2 \times \cdots \times n_d}$ 
is multi-dimensional array of real numbers (the solution tensor), 
$\bf G$ is a finite dimensional linear operator 
(the discrete form of $\mathcal{L}$). 
The structure of $\bf G$ depends on the 
spacial discretization of $u(t,{\bf x})$, as 
well as on the tensor format utilized for $\bf u$. 
As is well known, any straightforward spacial 
discretization of \eqref{nonlinear-ibvp}  
inevitably leads to the so-called curse of 
dimensionality \cite{Bellman1957}. 
For example, approximating $u(t,{\bf x})$ using a 
finite-dimensional Galerkin basis with $N$ degrees of freedom in 
each spacial variable yields a total number of $N^d$ degrees 
of freedom. 
To address the exponential growth of such degrees of 
freedom, the computational cost and the storage 
requirement, techniques such as sparse collocation 
\cite{Bungartz,Chkifa,Barthelmann,Foo1,Akil}, 
high-dimensional model representations (HDMR) 
\cite{Li1,CaoCG09,Baldeaux}, deep learning 
\cite{Beck2019,Raissi,Raissi1,Zhu2019} 
and hierarchical tensor methods 
\cite{venturi2018,khoromskij,Bachmayr,
parr_tensor,Hackbusch_book,ChinestaBook,Kolda} were 
recently proposed, with the most efficient one 
being problem-specific.
For instance, if a hierarchical Tucker 
(HT) tensor format is utilized, 
then the computational complexity of approximating 
$u(t,{\bf x})$ roughly scales as $\mathcal{O}(d \log N)$ 
instead of  $\mathcal{O}(N^d)$ (tensor product discretization) 
\cite{grasedyck2010hierarchical,lathauwer2000,
grasedyck2018distributed,kressner2014algorithm}.
Combining the spacial discretization \eqref{mol-ode} with 
a discrete ODE formula for the time-stepping, such an 
Adams-Bashforth formula, yields a decoupling of space 
and time which is known as the method of lines. For instance, 
if we discretize \eqref{mol-ode} in time with the 
two-step Adams-Bashforth formula, we obtain  
 \begin{equation}
{\bf u}^{k+1} = {\bf u}^{k}+\frac{\Delta t}{2} {\bf G} \left(3{\bf u}^{k}-{\bf u}^{k-1}\right).
\label{AB2}
\end{equation}
Of particular interest are low-rank hierarchical tensor 
approximations of the solution to \eqref{mol-ode}. 
Such approximations allow us to significantly reduce 
the number of degrees of freedom in the representation  
of the solution tensor ${\bf u}(t)$, while maintaining accuracy.
Low-rank tensor approximations of \eqref{mol-ode} can 
be constructed by using, e.g., rank-constrained temporal 
integration \cite{Lubich2018,hierar,Alec2019} on a  
(smooth) tensor manifold with constant rank \cite{uschmajew2013geometry}.
Alternatively, one can utilize the fully discrete 
scheme \eqref{AB2} followed by a rank-reduction 
operation. To this end, suppose we are given a 
low-rank representation of ${\bf u}^{k}$ and ${\bf u}^{k-1}$. 
Computing ${\bf u}^{k+1}$ based on the scheme \eqref{AB2}
involves addition of tensors, and the application of a 
linear operator. All these operations increase the tensor rank
of the solution, i.e., the storage requirements.
To avoid an undesirable growth of the tensor rank in time, 
we need to {truncate} ${\bf u}^{k}$ 
back to a tensor manifold with constant rank. 
This operation is essentially a nonlinear projection 
which can be computed, e.g., by a sequence of 
matricizations followed by high-order 
singular value decomposition (HOSVD)
\cite{grasedyck2010hierarchical,grasedyck2018distributed,
kressner2014algorithm}, or by optimization 
\cite{Vandereycken2013,da2015optimization,Smith1994,
Kolda,parr_tensor,Silva,Rohwedder,Karlsson}.

The main objective of this paper is to study the 
effects of truncation onto a low-rank tensor 
manifold on the numerical stability of explicit 
linear multistep schemes such as \eqref{AB2}.
To this end, we develop a thorough analysis based 
on a rigorous operator framework, which allows us to 
determine whether rank-constrained LMM integrators
are stable or not. We also argue that the applicability of PDE solvers 
with explicit time-stepping may be limited by time-step 
restriction dependent on the dimension of the spacial variable.

This paper is organized as follows. In section \ref{subsec:explicit-time}
we briefly review stability of linear multistep methods (LMM)  
to solve the ODE \eqref{mol-ode}. To this end, we follow the 
excellent analysis of Reddy and Trefethen \cite{Reddy1990}.
In section \ref{subsec:trunc-explicit-time} we discuss tensor 
rank-reduction methods in linear multistep schemes, and study 
the stability of the corresponding algorithms. 
 In section \ref{sec:stiffness} we
argue that the applicability of PDE solvers with explicit time-stepping 
may be limited by time-step restrictions dependent on the 
dimension of the problem. 
Numerical examples demonstrating the theoretical claims 
are presented and discussed in section \ref{sec:numeric-eg}. 
Finally, the main findings are summarized in 
section \ref{sec:summary}. We also include two brief
appendices where we review classical tensor algebra and the 
hierarchical Tucker tensor format.


\section{Explicit linear multistep methods}
\label{subsec:explicit-time}
The numerical solution to semi-discrete form \eqref{mol-ode} can be computed 
with the use of any ODE solver. In preparation for an analysis
of rank-truncated time stepping algorithms involving hierarchical tensors, 
we will follow the stability analysis of Reddy and Trefethen \cite{Reddy1990}. 
Their analysis follows an explicit $s$-step linear multistep 
method (LMM) of the form
\begin{align}
{{\bf u}}^{k+s} + \sum_{j=0}^{s-1}a_j{{\bf u}}^{k+j} =
\Delta t\sum_{j=0}^{s-1}b_j {\bf G}{{\bf u}}^{k+j},
\label{LMM}
\end{align}
where ${\bf u}^{k}\approx{{\bf u}}(k\Delta t)$ approximates the solution 
to \eqref{mol-ode} at time  $t_k=k\Delta t$ ($k=0,1,\ldots$). 
Upon definition of  
\begin{align}
{\bf v}^{k+1} 
\begin{bmatrix}
{\bf u}^{k+s}\\ {\bf u}^{k+s-1}\\ \vdots \\{\bf u}^{k+1}
\end{bmatrix}
\quad \text{and}\quad
{\bf L}_{\Delta t} = 
\begin{bmatrix}
 b_{s-1}\Delta t {\bf G}-a_{s-1}{\bf I} &
\cdots &
b_{1}\Delta t  {\bf G}-a_1{\bf I} &
b_{0}\Delta t {\bf G}-a_0{\bf I}\\
{\bf I} &  \cdots & {\bf 0}  &{\bf 0} \\
 \vdots & \ddots & \vdots & \vdots \\
{\bf 0}&\cdots & {\bf I} & {\bf 0}
\end{bmatrix}
\label{lin-time-step-op}
\end{align}
we can write the LMM \eqref{mol-ode} 
in a compact form as  
\begin{align}
 {\bf v}^{k+1} = {\bf L}_{\Delta t} {\bf v}^{k},
 \label{LMMrecurrence}
\end{align}
where ${\bf 0}$ is the zero operator and 
${\bf I}$ is the identity operator.
Note that this is not expressed as matrix-vector multiplication 
at each index, but instead is now block application of linear 
operators for different time indexes. 

\subsection{Stability analysis of explicit LMM}
The semi-discrete method \eqref{mol-ode}  
is said to be space-stable (or stable) if for some $T > 0$ 
the solution is bounded for $0 < t < T$, for arbitrary 
initial conditions in some Banach space, 
as the number of degrees of freedom (e.g., mesh points or modes) 
increases. 
In this section we briefly review the definition of stability for 
explicit linear multi-step discretizations. The operator 
${\bf L}_{\Delta t}$ in \eqref{lin-time-step-op} 
defines a stable method under norm $||\cdot ||$  if
\begin{equation}
\left\|
\left (
{\bf L}_{\Delta t}
\right )^n {\bf v}^0 - 
\left (
{\bf L}_{\Delta t}
\right )^n {\bf w}^0
 \right\|
\leq
C_T \left\|
{\bf v}^0 - {\bf w}^0
\right\|
\end{equation}
for any pair of initial states ${\bf v}^0, {\bf w}^0$. 
The real number $C_T\geq 0$ is the Lipschitz
constant of the scheme for a fixed integration 
time $T$. $C_T$ must be independent of
${\bf v}^0$, ${\bf w}^0$, the spacial resolution in $\bf G$, 
as well as $\Delta t$ and $n$ if $n\Delta t \leq T$. 
Reddy and Trefethen \cite{Reddy1990} showed that we may instead
check if the scheme is power-bounded. It is easy to 
see by the linearity of ${\bf L}_{\Delta t}$ that the 
above condition is equivalent to the inequality
\begin{equation}
 \left\|
\left (
{\bf L}_{\Delta t}
\right )^n {\bf z}^0
 \right\|
\leq
C_T  \left\|
{\bf z}^0
 \right\|,
\end{equation}
where ${\bf z}^0 = {\bf v}^0 - {\bf w}^0$ is arbitrary. Clearly, if
${\bf z}^0 = {\bf 0}$ then equality is achieved. So we lose no generality by
imposing ${\bf z}^0\neq {\bf 0}$.  Divide both sides by $||{\bf z}^0||$ to obtain
\begin{equation}
\frac{
 \left\|
\left (
{\bf L}_{\Delta t}
\right )^n {\bf z}^0
 \right\|
}{
  \left\|
{\bf z}^0
 \right\|
}
\leq
C_T.
\end{equation}
It is immediate that the inequality 
holds for all ${\bf z}^0\neq {\bf 0}$ 
if and only if it holds for the maximizing ${\bf z}^0$. 
Therefore, stability for LMM is equivalent to
the time stepping operator being power bounded, i.e., 
\begin{equation}
 \left\|\left (
{\bf L}_{\Delta t}
\right )^n
 \right\|=
\max_{{\bf z}\neq 0}
\frac{
 \left\|
\left (
{\bf L}_{\Delta t}
\right )^n {\bf z}
 \right\|
}{
  \left\|
{\bf z}
 \right\|
}
\leq
C_T.
\end{equation}
If we let $||\cdot|| = ||\cdot||_2$, then the operator 
norm is the spectral radius. Reddy and Trefethen \cite{Reddy1990,Reddy1992}
proved that power bounded property 
for $n\rightarrow \infty$ and fixed $\Delta t$ is equivalent to
a statement about the eigenvalues of ``nearby'' linear operators, 
the so-called $\varepsilon$-eigenvalues. 
Specifically, given any $\varepsilon >0$, there exists $\bf E$ 
with $|| {\bf E}||_2 \leq\varepsilon$ satisfying
\begin{equation}
\label{power-bounded}
\left\|{\bf L}_{\Delta t} + {\bf E} \right\|_2 \leq 1 + C_{T}\varepsilon.
\end{equation}
They also show that if $n\rightarrow \infty$ 
is changed to the condition $n\Delta t\leq T$,
then there is a direct generalization of the Lax stability 
inequality \cite{Lax1956}.
Specifically, given any $\varepsilon >0$, there exists $\bf E$ with
$|| {\bf E}||_2 \leq\varepsilon$ satisfying
\begin{equation}
\label{general-lmm-stable}
\left\| {\bf L}_{\Delta t} + {\bf E} \right\|_2 \leq 1 + K_{T}\varepsilon+Q_{T}\Delta t
\qquad\text{for all $n$ such that} \quad n\Delta t \leq T.
\end{equation}
The inequality above may be interpreted as
``the operator ${\bf L}_{\Delta t}$
can be perturbed in a such a way that its eigenvalues grow
linearly with time step away from a slightly enlarged
unit disk in the complex plane.'' In their paper, the statement
is made in terms of the spectral radius rather than operator norms.

\section{Explicit LMM on low-rank tensor manifolds}
\label{subsec:trunc-explicit-time}

We represent ${\bf u}^{j}$ in \eqref{LMM} or \eqref{LMMrecurrence}
with a hierarchical tensor format corresponding to an arbitrary 
binary tree. Well-known examples of such tensors are 
the hierarchical Tucker (HT) format 
\cite{Hackbusch2009,grasedyck2010hierarchical}, 
and the tensor-train (TT) format \cite{OseledetsTT}.
Most algebraic operations between tensors, 
including tensor addition and the application of a linear 
operator to a tensor, increase the tensor rank. 
Therefore, to avoid an unbounded growth in time 
of tensor rank of the solution obtained by an iterative 
application of \eqref{LMM} or \eqref{LMMrecurrence}, 
we need {\em truncate} ${\bf u}^{k}$ back to a tensor 
manifold of constant rank. This operation is essentially a 
nonlinear projection which can be computed, e.g., by a 
sequence of matricizations followed by hierarchical 
singular value decomposition (SVD)
\cite{grasedyck2010hierarchical,grasedyck2018distributed,kressner2014algorithm}, 
by Riemannian optimization 
\cite{Vandereycken2013,da2015optimization,Smith1994,Kolda,parr_tensor,Silva,Rohwedder,Karlsson}, 
or by rank-constrained temporal integration \cite{Lubich2018,hierar}.
Hereafter we study a simple algorithm for rank-constrained 
temporal integration, where we truncate  ${\bf u}^{k}$ 
with high-order SVD. The method has the form 
\begin{equation}
\label{truncated-lmm}
{{\bf u}}^{k+s} =  {\mathfrak T}_r\left(\sum_{j=0}^{s-1}
\Delta t b_j {\bf G}{\bf u}^{k+j} -
a_j{{\bf u}}^{k+j} \right ),
\end{equation}
where ${\mathfrak T}_r(\cdot)$ is the rank-$r$ 
(nonlinear) truncation operator. 
Equation \eqref{truncated-lmm} gives us the simplest 
truncated tensor method for solving high dimensional 
linear PDE of the form \eqref{nonlinear-ibvp}.
As before, we transform the $s$-term recurrence 
\eqref{truncated-lmm} into a 1-term recurrence as 
\begin{align*}
\begin{bmatrix}
{\bf u}^{k+s}\\ {\bf u}^{k+s-1}\\ \vdots \\{\bf u}^{k+1}
\end{bmatrix}
=
\begin{bmatrix}
\displaystyle{\mathfrak T}_r\left(\sum_{j=0}^{s-1}
\Delta t b_j {\bf G}{\bf u}^{k+j} -
a_j{{\bf u}}^{k+j} \right )
\\ {\bf u}^{k+s-1}\\ \vdots \\{\bf u}^{k+1}
\end{bmatrix}.
\end{align*}
However, in this case we lose the operator multiplication 
property. I.e., we cannot write the scheme as 
${\bf v}^{k+1}=\mathfrak{T}_r\left({\bf L}_{\Delta t}{\bf v}^{k} \right)$ 
unless we truncate all tensors $\{{\bf u}^{k},\ldots,{\bf u}^{k+s-1}\}$ 
to rank $r$. If we do so, then we have
${\mathfrak T}_r({\bf u}^{k+j}) = {\bf u}^{k+j}$ for all $j < s$, 
as well as the most recent time step $j=s$. This yields the scheme 
\begin{align*}
\begin{bmatrix}
{\bf u}^{k+s}\\ {\bf u}^{k+s-1}\\ \vdots \\{\bf u}^{k+1}
\end{bmatrix}
=
\begin{bmatrix}
\displaystyle{\mathfrak T}_r\left(\sum_{j=0}^{s-1}
\Delta t b_j {\bf G}{\bf u}^{k+j} -
a_j{{\bf u}}^{k+j} \right )
\\{\mathfrak T}_r( {\bf u}^{k+s-1})\\ \vdots \\{\mathfrak T}_r({\bf u}^{k+1})
\end{bmatrix},
\end{align*}
which we denote more concisely as 
\begin{align}
 {\bf v}^{k+1} = {\mathfrak T}_r({\bf L}_{\Delta t} {\bf v}^{k}).
 \label{truncatedLMMscheme}
\end{align}
The truncation operator removes linearity from this method, since for any 
two HT tensors ${\bf x}$ and ${\bf y}$ we have 
\begin{equation}
  {\mathfrak T}_r({\bf x}) + {\mathfrak T}_r({\bf y}) \neq
  {\mathfrak T}_r({\bf x}+{\bf y}).
\end{equation}
This can easily be seen by looking at an example where
${\bf x}=(1,0)\otimes(1,0)$, ${\bf y} = (0,\frac{1}{2})\otimes(0,1)$, and
${\mathfrak T}_r$ is the rank 1 matrix truncation operator computed using
the SVD. Since $\bf x$ and
$\bf y$ both have rank 1, the truncation operator does nothing and so the
left side is rank 2. The right side is rank 1 by definition, therefore not
equal. However, the following important property holds true
\begin{equation}
{\mathfrak T}_r(\alpha {\bf x}) = \alpha {\mathfrak T}_r({\bf x})
\label{scalability}
\end{equation}
for a scalar $\alpha$. This can be shown by using the recursive definition
of the high-order SVD \cite{grasedyck2010hierarchical}. 
We'll call property \eqref{scalability} {\em scalability} of the 
rank-truncation operation.

\paragraph{Remark} It can be verified easily that scalable
functions are a vector space and a subspace of the set of
all functions ${\mathbb R}^N\mapsto{\mathbb R}^M$. This
will be used to define a norm of the space of these scalable functions.

\subsection{Stability analysis of LMM on low-rank tensor manifolds}
\label{sec:stability}

We wish to establish a relationship between the LMM time 
stepping operator ${\bf L}_{\Delta t}$ and its rank-truncated 
version ${\mathfrak T}_r({\bf L}_{\Delta t} (\cdot))$. Such 
relationship is nontrivial, as highlighted by the following example.
Suppose ${\bf L}_{\Delta t}$ is a linear contraction in a 
specified norm, i.e., a linear map
whose Lipschitz constant is $C\in[0,1)$. Then
${\mathfrak T}_r({\bf L}_{} (\cdot))$ need not be 
a contraction in that same norm.
At first, this appears to be telling us to lose hope on
maintaining stability after truncation. However, there are
a few remarkable facts about the nature of the truncation operator 
${\mathfrak T}_r$ which suggest that such operator is either 
neutral or can enhance stability. 
Firstly, if the numerical solution is always below a known 
rank\footnote{The numerical solution to a
constant coefficient advection equation with separable initial condition is 
always rank one \cite{Alec2019}.},
then the rank-truncation operator is an identity operator on that
set of known rank. In this case, the classical linear stability analysis
can be applied. Secondly, we will see that the relationship
between ${\bf L}_{\Delta t}$ and ${\mathfrak T}_r({\bf L}_{\Delta t} (\cdot))$ 
defines a type of stability. 

We begin our analysis by recalling the definition
of seminorm of a nonlinear  function 
${\bf T}:{\mathbb R}^N\rightarrow{\mathbb R}^M$. The seminorm 
of ${\bf T}$ is the same as the norm of a linear map, but since $\bf T$
need not be continuous, we replace $\max$ with $\sup$.
\begin{align}
    \left\|{\bf T}\right\| = \sup_{{\bf z}\neq{\bf 0}}
    \frac{\left\|{\bf T}({\bf z})\right\|}{\left\|{\bf z}\right\|}.
    \label{seminorm}
\end{align}
It can be verified that the above definition obeys
triangle inequality and absolute scalability. Moreover, 
for any ${\bf w}\neq {\bf 0}$ we have
\begin{align*}
     \frac{\left\|{\bf T}({\bf w})\right\|}{\left\|{\bf w}\right\|} 
     \leq \sup_{{\bf z}\neq{\bf 0}}
    \frac{\left\|{\bf T}({\bf z})\right\|}{\left\|{\bf z}\right\|}
\end{align*}
by definition of supremum. Multiplying by $\left\|{\bf w}\right\|$ 
the denominator yields an inequality that is very similar to  
Cauchy-Schwartz
\begin{equation}
\label{almost-cauchy-shwarz}
    {\left\|{\bf T}({\bf w})\right\|}
     \leq {\left\|{\bf w}\right\|}  \sup_{{\bf z}\neq{\bf 0}}
    \frac{\left\|{\bf T}({\bf z})\right\|}{\left\|{\bf z}\right\|}
    = \left\|{\bf w}\right\|\left\|{\bf T}\right\|.
\end{equation}
%
Now, suppose ${\bf T}$ satisfies the scalability property, i.e., 
${\bf T}(\alpha {\bf z}) = \alpha {\bf T}({\bf z})$, as with
the rank truncation operator. Then we can pass the 
norm of $\bf z$ into the numerator.
\begin{align*}
    \left\|{\bf T}\right\| = \sup_{{\bf z}\neq{\bf 0}}
    \frac{\left\|{\bf T}({\bf z})\right\|}{\left\|{\bf z}\right\|}=
     \sup_{{\bf z}\neq{\bf 0}}
    \left\|
    {\bf T}\left ( \frac{{\bf z}}{\left\|{\bf z}\right\|}\right )
    \right\|
    = \sup_{\left\|{\bf u}\right\|={ 1}}
    \left\|
    {\bf T}\left ( {\bf u} \right )
    \right\|.
\end{align*}
In other words, for scalable functions, we can take 
maximization over the unit sphere in a given norm. 
Additionally, the norm here is arbitrary. 
Now we show that the operator semi-norm defines
a norm on the vector space of scalable functions.
Essentially, we need to show that for scalable $\bf T$,
$||{\bf T}|| = 0$ implies $\bf T$ is zero everywhere.
To this end, a proof by contradiction is sufficient. 
Suppose ${\bf T}({\bf w})\neq {\bf 0}$. Then $||{\bf T}({\bf w})||> 0$.
Since for any $\bf v$, we have 
${\bf T}({\bf 0}) ={\bf T}(0{\bf v})=0 {\bf T}({\bf v})={\bf 0}$,
we must have ${\bf w}\neq {\bf 0}$. So the ratio of 
$||{\bf T}({\bf w})||$ and $||{\bf w}||$
is positive. This implies that 
\begin{align*}
    0 < \frac{\left\|{\bf T}({\bf w})\right\|}{\left\|{\bf w}\right\|} =
    \left\|
    {\bf T}\left ( \frac{{\bf w}}{\left\|{\bf w}\right\|}\right )
    \right\|
    \leq \left\|{\bf T}\right\|=0,
\end{align*}
i.e., $0<0$, a contradiction. Therefore the operator 
semi-norm \eqref{seminorm} induces a norm on the vector 
space of scalable functions. 
In other words, the operator norm is well defined for 
scalable maps. Rather than the language of contractions, 
we now use the operator norm of a truncated multi-step 
time stepping operator to describe behavior 
of iterated application. 

\paragraph{Remark}
Suppose the map $\bf T$ is scalable and Lipschitz. 
Then for any $\bf u\neq 0$
\begin{align*}
     \left\||
    {\bf T}\left ( {\bf u} \right ) 
    - {\bf T}\left ( {\bf 0} \right )
    \right\|
    \leq
    C \left\|{\bf u} - {\bf 0}\right\|\quad \Rightarrow \quad
       \frac{ \left\|
    {\bf T}\left ( {\bf u} \right ) 
    \right\|}{
       \left\|{\bf u}\right\|
    } \leq C \quad \Rightarrow \quad \sup_{\left\|{\bf u}\right\|=1} \left\|{\bf T}({\bf u})\right\|\leq C
\end{align*}
since ${\bf T}\left ( {\bf 0} \right ) = {\bf 0}$ ($\bf T$ is scalable).
This means that the Lipschitz constant of $\bf T$ is an 
upper bound for the operator norm.

\vs
\noindent
Up to this point, the discussion has been developed for arbitrary norms.
To discuss the stability of LMM integrators on low-rank tensor manifolds 
\eqref{truncatedLMMscheme}, we consider
the 2-norm in particular. This is the norm computed
by squaring all entries of a tensor, summing, and then
taking square root.

\paragraph{Remark}
We recall that the singular value decomposition of
a matrix $\bf x$ is differentiable
with respect to $\bf x$ if the singular
values are nonzero and unique. Hence, ${\mathfrak T}_r$ 
is Lipschitz on every compact set containing only 
points at which the SVD is differentiable. 
If the differentiability assumption is not made,
the SVD is not even unique up to ordering of
the singular values.

\vs
\noindent
The following result characterizes the truncation operator 
 ${\mathfrak T}_r$ as a bounded nonlinear projection.
\begin{lemma}
\label{lemmaTr}
 The operator 2-norm of the hierarchical rank-truncation
 operator is 1, i.e., 
\begin{align}
    \sup_{{\bf x}\neq {\bf 0}}
        \frac{\left\|{\mathfrak T}_r({\bf x})\right\|_2}
        {\left\|{\bf x}\right\|_2}=1.
\end{align}
\end{lemma}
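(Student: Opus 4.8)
The plan is to establish the two inequalities $\sup \le 1$ and $\sup \ge 1$ separately; together they pin the supremum to exactly $1$. The lower bound is an attainment statement and the upper bound is a non-expansiveness statement, and each is proved by a different mechanism.

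For the lower bound I would exploit the observation already made in this section: on tensors whose rank is at most $r$, the hierarchical truncation acts as the identity. Concretely, choose any nonzero tensor ${\bf x}_0$ that already lies on the rank-$r$ manifold; a rank-one tensor ${\bf x}_0 = {\bf a}^{(1)}\otimes\cdots\otimes{\bf a}^{(d)}$ with each ${\bf a}^{(i)}\neq{\bf 0}$ will do. Then ${\mathfrak T}_r({\bf x}_0) = {\bf x}_0$, so $\|{\mathfrak T}_r({\bf x}_0)\|_2 / \|{\bf x}_0\|_2 = 1$, and therefore the supremum over all nonzero ${\bf x}$ is at least $1$.

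For the upper bound I would show that ${\mathfrak T}_r$ is non-expansive, i.e. $\|{\mathfrak T}_r({\bf x})\|_2 \le \|{\bf x}\|_2$ for every ${\bf x}$. The base case is a single matrix $M = U\Sigma V^{\top}$: keeping the $r$ largest singular values and discarding the rest splits $M$ into two Frobenius-orthogonal pieces, so by the Pythagorean identity $\|{\mathfrak T}_r(M)\|_2^2 = \sum_{i\le r}\sigma_i^2 \le \sum_i \sigma_i^2 = \|M\|_2^2$. Equivalently, truncation of one matricization is the orthogonal projection onto the dominant left-singular subspace, which is non-expansive. To pass to the hierarchical format I would invoke the recursive HOSVD definition of ${\mathfrak T}_r$ cited as \cite{grasedyck2010hierarchical}: the hierarchical truncation is realized as a composition of orthogonal projections $P_t$, one per node $t$ of the binary dimension tree, each projecting a matricization onto its dominant singular subspace. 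Two facts then close the argument. First, matricization merely reshapes the tensor and hence preserves the $2$-norm, so each $P_t$ is non-expansive as a map on tensors. Second, a composition of non-expansive maps is non-expansive, since $\|P_L\cdots P_1{\bf x}\|_2 \le \|P_{L-1}\cdots P_1{\bf x}\|_2 \le \cdots \le \|{\bf x}\|_2$, and this chain holds even though the $P_t$ need not commute. Thus $\|{\mathfrak T}_r({\bf x})\|_2 \le \|{\bf x}\|_2$ for all ${\bf x}$, giving $\sup \le 1$, and combining with the lower bound yields the claimed value $1$.

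The main obstacle I anticipate is the structural step in the hierarchical case: verifying rigorously that the sequential hierarchical SVD truncation really is a composition of orthogonal projections with respect to the Frobenius inner product, in particular that the subspaces extracted at each node give genuinely orthogonal projectors and that their non-commutativity causes no harm. The matrix estimate via Eckart--Young is immediate, and the attainment half is essentially a definition, but the reduction of the full tree truncation to a chain of non-expansive projections is where the care is needed. Once that fact is secured from the recursive HOSVD construction, the non-expansiveness bound and hence the upper estimate are routine.
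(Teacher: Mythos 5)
Your proposal is correct and follows essentially the same route as the paper: the upper bound is obtained exactly as in the paper's proof, by writing the hierarchical truncation as a composition of orthogonal projections ${\bf P}_t$ built from the $t$-mode matricizations and chaining their non-expansiveness. For the attainment half you exhibit a rank-one fixed point of ${\mathfrak T}_r$, while the paper invokes idempotence ${\mathfrak T}_r({\mathfrak T}_r({\bf x}))={\mathfrak T}_r({\bf x})$ to produce such a fixed point; these are trivial variants of the same argument.
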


\begin{proof}
 The hierarchical truncation can be computed by 
 (see \cite{grasedyck2010hierarchical} and \ref{sec:htformat})
    \begin{align*}
        {\mathfrak T}_r({\bf x}) &=
        \prod_{t\in{\cal T}_d^p}{\bf P}_t
        \cdots
        \prod_{t\in{\cal T}_d^1}{\bf P}_t{\bf x},
    \end{align*}
    where every ${\bf P}_t$ is an orthogonal projection
    formed using $t$-mode matricizations of $\bf x$.
    The particular ${\bf P}_t$ are dependent on a given
    $\bf x$. Recall that this implies the truncation
    operator is  nonlinear, but still scalable. 
    Since all ${\bf P}_t$ are orthogonal projections,
    they all have the property
    \begin{equation}
        ||{\bf P}_t{\bf v}||^2_2 = 
        \langle {\bf P}_t{\bf v},{\bf P}_t{\bf v}\rangle
        = \langle {\bf P}_t^\top {\bf P}_t{\bf v},{\bf v}\rangle
        =\langle {\bf P}_t {\bf P}_t{\bf v},{\bf v}\rangle
        =\langle {\bf P}_t{\bf v},{\bf v}\rangle
        \leq \left\|{\bf P}_t{\bf v}\right\|_2 \left\|{\bf v}\right\|_2.
    \end{equation}
    Dividing by $\left\|{\bf P}_t{\bf v}\right\|_2$, we have
    \begin{equation}
        \frac{\left\|{\bf P}_t{\bf v}\right\|_2}{\left\|{\bf v}\right\|_2}\leq 1,
    \end{equation}
    for arbitrary $\bf v$. Therefore, the operator norm is
    at most 1. Now apply this to the composition of operators
    which defines hierarchical truncation.
    \begin{align*}
        \left\|{\mathfrak T}_r({\bf x})\right\|_2 &=
        \left\|
        \prod_{t\in{\cal T}_d^p}{\bf P}_t
        \cdots
        \prod_{t\in{\cal T}_d^1}{\bf P}_t{\bf x}
          \right\|_2 \\
    &\leq
        \left\|
        \prod_{t\in{\cal T}_d^p}{\bf P}_t
        \cdots
        \prod_{t\in{\cal T}_d^1}{\bf P}_t
          \right\|_2 \left\|{\bf x}\right\|_2\\
    &\leq \prod_{t\in{\cal T}_d^p}
        \left\|{\bf P}_t\right\|_2
        \cdots
         \prod_{t\in{\cal T}_d^1}\left\|{\bf P}_t\right\|_2
         \left\|{\bf x}\right\|_2\\
    &\leq \left\|{\bf x}\right\|_2
    \end{align*}
    Dividing by $\left\|{\bf x}\right\|_2$, we get
    \begin{equation}
        \frac{\left\|{\mathfrak T}_r({\bf x})\right\|_2}{\left\|{\bf x}\right\|_2}\leq 1.
    \end{equation}
    Equality is achieved by noting that
    ${\mathfrak T}_r({\mathfrak T}_r({\bf x}))={\mathfrak T}_r({\bf x})$,
    \begin{align*}
        ||{\mathfrak T}_r({\mathfrak T}_r({\bf x}))||_2
        &=||{\mathfrak T}_r({\bf x})||_2,\\
        \frac{||{\mathfrak T}_r({\mathfrak T}_r({\bf x}))||_2}
        {||{\mathfrak T}_r({\bf x})||_2}&= 1.
    \end{align*}
\end{proof}
%

\noindent
We now have all elements to prove stability of linear multistep 
integrators on low-rank tensor manifolds. 

\begin{theorem} (Stability of LMM on low-rank tensor manifolds)
\label{theorem-trunc-bounded}
Suppose
\begin{equation}
\label{linear-recurrence}
{\bf v}^{k+1} ={\bf L}_{\Delta t} {\bf v}^{k}
\end{equation}
 defines a Lax-stable linear multistep method,  i.e. 
 $\left\|{\bf L}_{\Delta t}\right\|_2 \leq 1+K\Delta t$. 
 Then the rank-truncated scheme 
 \begin{equation}
 {\bf v}^{k+1} = {\mathfrak T}_r({\bf L}_{\Delta t} {\bf v}^{k})
 \label{truncated}
\end{equation}
is stable as long as the LMM scheme \eqref{LMMrecurrence}
is stable. 
\end{theorem}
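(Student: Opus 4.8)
The plan is to treat the one-step map of the truncated scheme \eqref{truncated}, namely the composition ${\bf S} := {\mathfrak T}_r \circ {\bf L}_{\Delta t}$, as a single scalable operator and to control the seminorm of its iterates ${\bf S}^n$ with the machinery developed above. First I would verify that ${\bf S}$ is scalable: since ${\bf L}_{\Delta t}$ is linear and ${\mathfrak T}_r$ satisfies the scalability property \eqref{scalability}, we get ${\bf S}(\alpha {\bf z}) = {\mathfrak T}_r(\alpha {\bf L}_{\Delta t} {\bf z}) = \alpha {\bf S}({\bf z})$, so ${\bf S}$ lies in the vector space of scalable maps on which the seminorm \eqref{seminorm} is a genuine norm. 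The stability I aim to establish is precisely a uniform bound $\|{\bf S}^n\| \leq C_T$ for all $n$ with $n\Delta t \leq T$, which is the scalable-map analogue of the power-boundedness discussed in Section \ref{subsec:explicit-time}.

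The key intermediate fact I would prove (or extract from the preceding discussion) is submultiplicativity of the seminorm for arbitrary maps ${\bf A}, {\bf B}$: applying the almost-Cauchy-Schwarz inequality \eqref{almost-cauchy-shwarz} twice gives $\|{\bf A}({\bf B}({\bf w}))\|_2 \leq \|{\bf A}\| \, \|{\bf B}({\bf w})\|_2 \leq \|{\bf A}\| \, \|{\bf B}\| \, \|{\bf w}\|_2$; dividing by $\|{\bf w}\|_2$ and taking the supremum over ${\bf w} \neq {\bf 0}$ then yields $\|{\bf A} \circ {\bf B}\| \leq \|{\bf A}\| \, \|{\bf B}\|$. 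Crucially, this holds even though ${\mathfrak T}_r$ is nonlinear, because \eqref{almost-cauchy-shwarz} only uses the definition of the seminorm and not linearity.

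With submultiplicativity in hand, I would bound the one-step operator by factoring off the truncation: $\|{\bf S}\| = \|{\mathfrak T}_r \circ {\bf L}_{\Delta t}\| \leq \|{\mathfrak T}_r\| \, \|{\bf L}_{\Delta t}\|_2 = 1 \cdot (1 + K\Delta t)$, where $\|{\mathfrak T}_r\| = 1$ is exactly Lemma \ref{lemmaTr} and $\|{\bf L}_{\Delta t}\|_2 \leq 1 + K\Delta t$ is the Lax-stability hypothesis. Iterating submultiplicativity gives $\|{\bf S}^n\| \leq \|{\bf S}\|^n \leq (1 + K\Delta t)^n$. Finally, for $n\Delta t \leq T$ I would use the elementary estimate $(1 + K\Delta t)^n \leq e^{K n \Delta t} \leq e^{KT} =: C_T$, a constant independent of $\Delta t$, $n$, and the spatial resolution in ${\bf G}$, and conclude $\|{\bf S}^n {\bf v}^0\|_2 \leq C_T \|{\bf v}^0\|_2$, i.e. that the truncated scheme is stable.

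The main obstacle, and the conceptual heart of the argument, is not any single computation but the recognition that the truncation ${\mathfrak T}_r$ — despite being nonlinear and destroying the contraction property, as the counterexample immediately preceding the theorem shows — still contributes a multiplicative factor of exactly $1$ to the seminorm of the composed map. Everything hinges on the two facts that the seminorm is submultiplicative for scalable maps and that $\|{\mathfrak T}_r\| = 1$; once these are secured, the truncated iteration inherits the same exponential-in-$T$ stability constant $C_T = e^{KT}$ as the underlying linear LMM. I would also be explicit that stability is understood here as boundedness of $\|{\bf S}^n\|$ in the scalable-map seminorm, rather than the two-trajectory Lipschitz estimate of Section \ref{subsec:explicit-time}, so that the conclusion is consistent with the operator-norm framework established above.
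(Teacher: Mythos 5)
Your proposal is correct and follows essentially the same route as the paper's proof: both arguments rest on Lemma \ref{lemmaTr} (the bound $\left\|{\mathfrak T}_r\right\|_2 = 1$), the Lax-stability hypothesis $\left\|{\bf L}_{\Delta t}\right\|_2 \leq 1+K\Delta t$, and the estimate $(1+K\Delta t)^n \leq e^{KT} = C_T$ for $n\Delta t \leq T$. Your explicit submultiplicativity lemma for scalable maps is exactly what the paper's induction step performs inline --- peeling off one factor of ${\mathfrak T}_r$ (contributing $1$) and one factor of ${\bf L}_{\Delta t}$ (contributing at most $1+K\Delta t$) per iteration --- so the two proofs differ only in packaging, with your operator-level formulation avoiding the trajectory-by-trajectory index bookkeeping of the paper's inductive chain.
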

\begin{proof}
  	We proceed by induction. For $k=1$ the theorem follows
    from inequality \eqref{almost-cauchy-shwarz}. For $k> 1$ we 
    utilize Lemma \ref{lemmaTr}, and write  
\begin{equation}    
     \left\|{\mathfrak T}_r(
     {\bf L}_{\Delta t}{\bf v}^{k})\right\|_2\leq
    \left\|{\bf L}_{\Delta t}\right\|_2^{k}\left\|{\bf v}^0\right\|_2.
    \end{equation}
    Then we recall that stability of a LMM is equivalent to
    \begin{equation}
    \left\|({\bf L}_{\Delta t})^k \right\|_2 \leq C_{T}
    \quad \forall k\Delta t\leq T.
    \end{equation}
    Assuming that the linear recurrence \eqref{linear-recurrence} 
    is Lax-stable, we have
    \begin{equation*}
      \left\|({\bf L}_{\Delta t})^k \right\|_2
      \leq \left\|({\bf L}_{\Delta t}) \right\|_2^k
      \leq(1+K\Delta t)^k
      \leq e^{K\Delta t\cdot k}=e^{KT}=C_T.
    \end{equation*}
    Therefore,
    \begin{align*}
        \left\| {\bf v}^{k+1} \right\|_2 &=
         \left\|{\mathfrak T}_r({\bf L}_{\Delta t} {\bf v}^{k})\right\|_2 \\
         &\leq 
          \left\|{\mathfrak T}_r\right\|_2 
          \left\|{\bf L}_{\Delta t} {\bf v}^{k}\right\|_2 \\
         &\leq \left\|{\bf L}_{\Delta t} {\bf v}^{k}\right\|_2 \\
         &=\left\|{\bf L}_{\Delta t}{\mathfrak T}_r(
            {\bf L}_{\Delta t}{\bf v}^{k-1})\right\|_2\\
         &\leq \left\| {\bf L}_{\Delta t}\right\|_2\left\|{\mathfrak T}_r(
            {\bf L}_{\Delta t}{\bf v}^{k-1})\right\|_2\\
         &\leq \left\| {\bf L}_{\Delta t}\right\|_2^k\left\|{\bf v}^{0}\right\|_2
            \qquad\qquad\qquad\text{(Inductive Hypothesis)}\\
        &\leq C_T \left\|{\bf v}^{0}\right\|_2.
    \end{align*}
By inductive hypothesis, the iteration remains bounded.
\end{proof}

\noindent
Theorem \ref{theorem-trunc-bounded} states that if LMM 
is stable then the rank-truncated LMM is stable. 
This does not exclude the possibility that the truncation 
operator $\mathfrak{T}_r$ can stabilize an unstable LMM
scheme.

\subsection{Consistency and convergence}

The Lax-Richtmyer equivalence theorem states that 
if a method is consistent and stable then the numerical solution 
converges to the solution of the differential equation. 
Clearly, if the hierarchical ranks of ${\bf v}^k$
remain below the truncation rank and the
linear scheme \eqref{linear-recurrence} is 
consistent, then the truncated scheme is 
convergent, i.e., the error goes to zero as 
the number of degrees of freedom (e.g., mesh points or modes) 
increases. To show this it is sufficient to notice that 
if the hierarchical ranks remain below the truncation rank 
for all time then ${\mathfrak T}_r({\bf L}_{\Delta t} {\bf v}^{k})=
{\bf L}_{\Delta t} {\bf v}^{k}$, i.e., the truncation operation is 
essentially the identity. 
More generally, finite-rank tensor schemes can be consistent
if and only if the rank of the analytical solution is finite-rank. This happens, 
for example, in constant coefficient advection or diffusion problems 
with separable initial conditions in periodic domains.
If the analytic solution is of infinite rank then to establish 
consistency one must find a way to raise the hierarchical ranks 
at a rate that depends on the discretization. This requires a 
problem-specific analysis that is beyond the scope of this paper.

\section{Stiffness in high-dimensional PDEs}
\label{sec:stiffness}
In this section, we argue that the applicability of 
PDE solvers with explicit time-stepping such 
as \eqref{LMMrecurrence} may be limited by 
time-step restrictions dependent on dimension 
$d$. Rather than formulating a general theorem on this matter, 
we provide a simple example in which we compute 
the Lipschitz constant associated with a few common linear 
PDE operators. Since the Lipschitz constant corresponds
directly with the time step for many ODE solvers
\cite{hairer1991solving,wanner1996solving}, 
this quantity provides information on what type of scheme one 
can use to integrate the PDE forward in time while maintaining stability.
The relation between the time step and the Lipschitz constant 
is nontrivial, but it often happens that these quantities are 
inversely proportional. This is true, e.g., for implicit
$s$-stages Runge-Kutta methods 
\cite[Theorem 7.2]{hairer1991solving},
where the condition
\begin{equation}
\Delta t < \left(L \max_i \sum_{j=1}^s \left|a_{ij}\right|\right)^{-1}
\label{RK}
\end{equation}
guarantees a unique numerical solution when 
iterating the RK scheme. In \eqref{RK}, 
$a_{ij}$ is the Butcher tableau, and $L > 0$ is the 
Lipschitz constant of velocity vector at the right hand side 
of equation \eqref{mol-ode}.
Since ${\bf G}$ is linear, the Lipschitz constant 
can be stated as 
\begin{equation}
 \left\| {\bf G} {\bf z}\right\| \leq L \left\|{\bf z}\right\| \qquad \forall {{\bf z}\in K},
\end{equation}
where $\left\|\cdot \right\|$ is a suitable norm, and $K$ is the 
phase space domain. If we utilize the 2-norm $\|\cdot \|_2$, then  
$\left\|\bf G\right\|_2$ is the spectral radius of the matrix 
$\bf G$. 
It was shown in \cite{wanner1996solving} via analysis and 
numerical examples that explicit time-stepping schemes 
can detect a stiff problem if the largest eigenvalue of
${\bf G}$ (an approximate local Lipschitz constant) lies 
on the boundary of the region of stability when scaled 
with step size. In a simpler setting, we can perform a Von-Neumann 
stability analysis \cite{Lax1956} (when possible), to show conditional 
or unconditional stability of explicit time-stepping schemes. 
To illustrate these concepts we consider the
following simple initial value problem 
\begin{equation}
    \label{const-coef-diff-eq}
  \frac{\partial u}{\partial t} =
  \sum_{k=1}^{d} c_k\frac{\partial^2 u}{\partial x_k^2},
  \qquad u(0,{\bf x}) = u_0({\bf x})
\end{equation}
in the domain $\Omega=[0,2\pi]^d$, with periodic 
boundary conditions. Equation \eqref{const-coef-diff-eq} 
is a simplified version of the PDEs derived in \cite{venturi2018}. 
In that paper, implicit and explicit numerical schemes for finite-dimensional 
approximations of Functional Differential Equations 
(FDEs) are discussed in great depth. 
A straightforward technique to solving such problems is 
to approximate the solution functional in the span of 
a $d$-dimensional basis. This yields a $d$-dimensional 
linear PDE which needs to be integrated in time. It was 
argued in \cite[\S 7.3.2]{venturi2018}
that explicit time stepping methods to solve such PDE 
need to operate with time steps that are in inverse 
proportionality with a power law of the dimension $d$. 
Hence, the larger the dimension the smaller the time 
step. 
%
Implicit temporal integrators can mitigate this 
problem, but they require the development of 
linear solvers on tensor manifolds with constant rank. 
This can be achieved, e.g., by utilizing Riemannian 
optimization algorithms
\cite{Vandereycken2013,da2015optimization,Smith1994,heidel2018riemannian}, 
or alternating least squares \cite{Etter,Kolda,Rohwedder,parr_tensor}.

Let us discretize the spacial derivatives in \eqref{const-coef-diff-eq}
with second-order centered finite differences on a tensor product 
evenly-spaced grid in each variable. This yields the semi-discrete form 
\begin{equation}
\label{const-coef-advect-mol}
\frac{d{\bf u}}{dt}=
\sum_{k=1}^{d}\frac{c_k}{\Delta x_k^2}\left (
{\bf u}(t,[j_1,\dots,j_{k + 1},\dots,j_d])-
2{\bf u}(t,[j_1,\dots,j_k,\dots,j_d])+
{\bf u}(t,[j_1,\dots,j_{k - 1},\dots,j_d])
\right),
\end{equation}
where $[j_1,\dots,,\dots,j_d]$ labels an entry of the 
tensor $\bf u$.
Following the classical Von-Neumann stability 
analysis \cite{Lax1956,Strikwerda,randall2007leveque}, 
we compute the discrete Fourier transform of the solution tensor 
\begin{equation}
\label{multi-var-dft}
{\bf u}(t,[j_1,j_2,\dots,j_d])=
\sum_{q_1 = 0}^{n_1-1}\sum_{q_2 = 0}^{n_2-1}\cdots
\sum_{q_d = 0}^{n_d-1}{\widehat{\bf u}}(t,[q_1,q_2,\dots,q_d])\exp\left [
{i\left (\sum_{k=1}^d j_k2\pi q_k\Delta x_k\right )}\right ].
\end{equation}
A substitution of \eqref{multi-var-dft} into 
\eqref{const-coef-advect-mol} yields 
\begin{align*}
&\sum_{q_1 = 0}^{n_1-1}\sum_{q_2 = 0}^{n_2-1}\cdots
\sum_{q_d = 0}^{n_d-1}\exp\left [
{i\left (\sum_{m=1}^d j_m 2\pi q_m\Delta x_m\right )}\right ]
\frac{d \widehat{\bf u}}{dt} =\\
&-4\sum_{q_1 = 0}^{n_1-1}\sum_{q_2 = 0}^{n_2-1}\cdots
\sum_{q_d = 0}^{n_d-1}\exp\left [i
{\left (\sum_{m=1}^d j_m 2\pi q_m\Delta x_m\right )}\right ]
\widehat{\bf u}
\sum_{k=1}^{d}  \frac{c_k\sin(2\pi q_k\Delta x_k)^2}{\Delta x_k^2}.
\end{align*}
We recognize that each of the terms in the spacial 
sum are orthogonal with respect to a standard Hermitian 
inner product. Hence, we can just compare the Fourier 
amplitudes term-by-term. This yields the complex-valued 
linear ODE
\begin{equation}
\label{diffusion-mol-dft}
\frac{d \widehat{\bf u}}{dt}=- 4 \widehat{\bf u}
\sum_{k=1}^{d}  \frac{c_k}{\Delta x_k^2}
\sin\left(2\pi q_k\Delta x_k\right)^2.
\end{equation}
The solution to the ODE decays in time, and therefore it shares the same 
qualitative behavior with the original initial value problem 
\eqref{const-coef-diff-eq}. By approximating the temporal derivative with 
the explicit one-step LMM method (Euler forward), and assuming 
an evenly spaced grid with the same spacing in each variable we obtain 
\begin{equation}
\widehat{\bf u}^{n+1} = \widehat{\bf u}^{n}\left[1 - 4 \frac{\Delta t}{\Delta x^2}
\sum_{k=1}^{d}  c_k
\left (\sin(2\pi q_k\Delta x_k)\right)^2\right].
\label{AB1}
\end{equation}
The amplification factors \cite{Lax1956} of this ODE are real and given by
\begin{equation}
g= \left[1 - 4 \frac{\Delta t}{\Delta x^2}
\sum_{k=1}^{d}  \frac{c_k}{\Delta x^2}
\sin\left(2\pi q_k\Delta x_k\right)^2\right].
\end{equation}
A necessary and sufficient condition for Lax-Richtmyer stability 
of \eqref{AB1} is $\left| g\right| \leq 1$ (see \cite{Lax1956}). 
This condition is equivalent to 
\begin{equation}
\Delta t \leq \frac{\Delta x^2}{\displaystyle 2d\max_j\{c_j\}}.
\end{equation}
Hence, Von-Neumann stability analysis suggests that the simple 
one-step method 
\eqref{AB1} is conditionally stable, with stiffness that increases with 
with the dimension $d$. In other words, the larger $d$ the 
smaller $\Delta t$.

\section{Numerical examples}
\label{sec:numeric-eg}
In this section we provide demonstrative examples of 
truncated linear multistep tensor methods applied 
to variable-coefficient advection-diffusion PDEs 
of the form
\begin{align}
  \frac{\partial}{\partial t} u(t,{\bf x}) = 
  -\sum_{k=1}^d \frac{\partial}{\partial x_k} 
        \big (f_k({\bf x}) u(t,{\bf x})\big )
	&+\sum_{k=1}^d\sum_{q=1}^d\frac{\partial^2}{\partial x_k \partial x_q}
		(\Gamma_{kq}({\bf x}) u(t,{\bf x})), 
\label{fp-eq}
\end{align}
where ${\bf f}({\bf x}) = [f_1({\bf x})\ f_2({\bf x})\ \dots \ f_d ({\bf x})]^T$ is the drift vector field and 
${\bf \Gamma}({\bf x}) = [\Gamma_{kq}({\bf x})]$ is the symmetric 
positive-definite diffusion matrix.
As is well-known, the PDE \eqref{fp-eq} governs the evolution 
of the probability density function corresponding to an 
ODE driven by multiplicative white noise \cite{risken1989}.
We approximate the solution of \eqref{fp-eq} in the 
spacial domain $\Omega=[0,2\pi]^d$ with 
periodic boundary conditions. 
In particular, we look at the growth of matrix rank in 
the case of a 2D hyperbolic PDE. Additionally, we provide 
examples of Theorem \ref{theorem-trunc-bounded} in the 
case of in higher-dimensional advection-diffusion PDEs. 
The C++/MPI code hierarchical Tucker code we developed 
to study these examples is available at \cite{rodgers2019htuckermpi}.

\subsection{Two-dimensional hyperbolic PDE}
\label{subsec:2d-advect}
Let us consider the two-dimensional hyperbolic PDE 
experiment with is
\begin{align}
  \frac{\partial}{\partial t} u(t,{\bf x}) &= 
  -\frac{\partial}{\partial x_1} 
        \big (\sin({x_2})\cdot u(t,{\bf x})\big )
   -\frac{\partial}{\partial x_2} 
        \big (\cos({x_1})\cdot u(t,{\bf x})\big ).
  \label{2d-var-coef-advect}
\end{align}

We discretize \eqref{2d-var-coef-advect} in space 
on an evenly spaced grid with $n\times n$ points in $[0,2\pi]^2$. 
Specifically we will consider both Fourier spectral methods and 
second-order finite-differences discretization. 
In the two-dimensional setting we consider here, the semi-discrete 
form \eqref{mol-ode} involves two-dimensional arrays, i.e. matrices.
Hence, hierarchical rank is the same as matrix rank in this case,
since the rank of a matrix and its transpose coincide.
Applying the two-step Adams-Bashforth method \eqref{AB2}. 
yields a linear recurrence relation of the 
form \eqref{linear-recurrence},  with
\begin{equation}
{\bf L}_{\Delta t}=
\begin{bmatrix}
\displaystyle -\frac{\Delta t }{2}{\bf G}+ {\bf I} &
\displaystyle  \frac{3}{2}\Delta t {\bf G}\\
{\bf I}& {\bf 0}
\end{bmatrix},
\label{L2d}
\end{equation}
and 
\begin{equation}
{\bf G}=-
\left( {\bf D}\otimes{\bf I}\right) {\text{diag}}[\sin({\bf x}_2)]+
\left( {\bf I} \otimes{\bf D}\right){\text{diag}}[\cos({\bf x}_1)].
\end{equation}
Here, $\sin({\bf x_2})$ and $\cos({\bf x_1})$ are vectorizations of 
$\sin(x_2)$ and $\cos(x_1)$ evaluated the 2D evenly-spaced spacial 
grid, and $\bm D$ is the first-order (one-dimensional) 
differentiation matrix.

In figure \ref{fig:2d-op-norm} 
we plot the typical behavior of the operator norm 
$\left\|{\bf L}^k_{\Delta t}\right\|$ versus $k$ 
for two conditionally stable schemes, namely the 
Fourier pseudo-spectral and the second-order 
centered finite-difference schemes on a $n\times n$ grid, 
with $n=4,8,16,32,64$. It is seen that 
$\left\|{\bf L}^k_{\Delta t}\right\|$ grows as $k^2$, 
in the case of the Fourier pseudo-spectral method, making it 
considerably less stable than the finite-difference method, 
in agreement with well-known results \cite{hesthaven2007spectral}.
\begin{figure}[t]
\centerline{\footnotesize
\hspace{-0.0cm}
Second-order centered finite differences
\hspace{3.2cm}
Fourier pseudo-spectral collocation}
\centering
\includegraphics[scale=0.23]{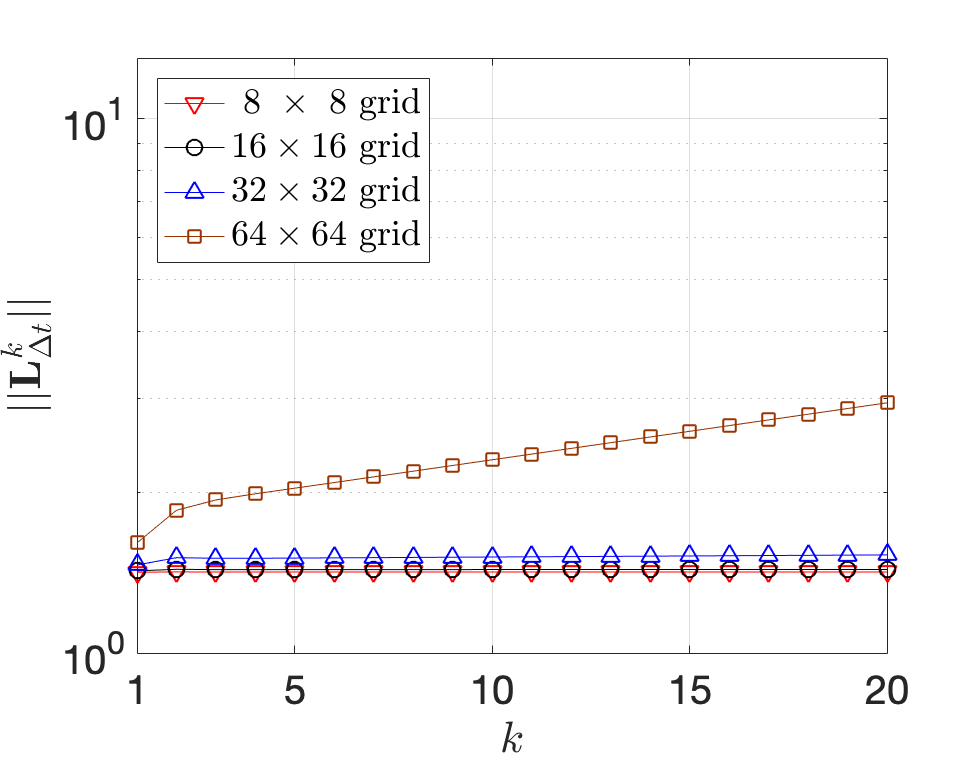}
\includegraphics[scale=0.23]{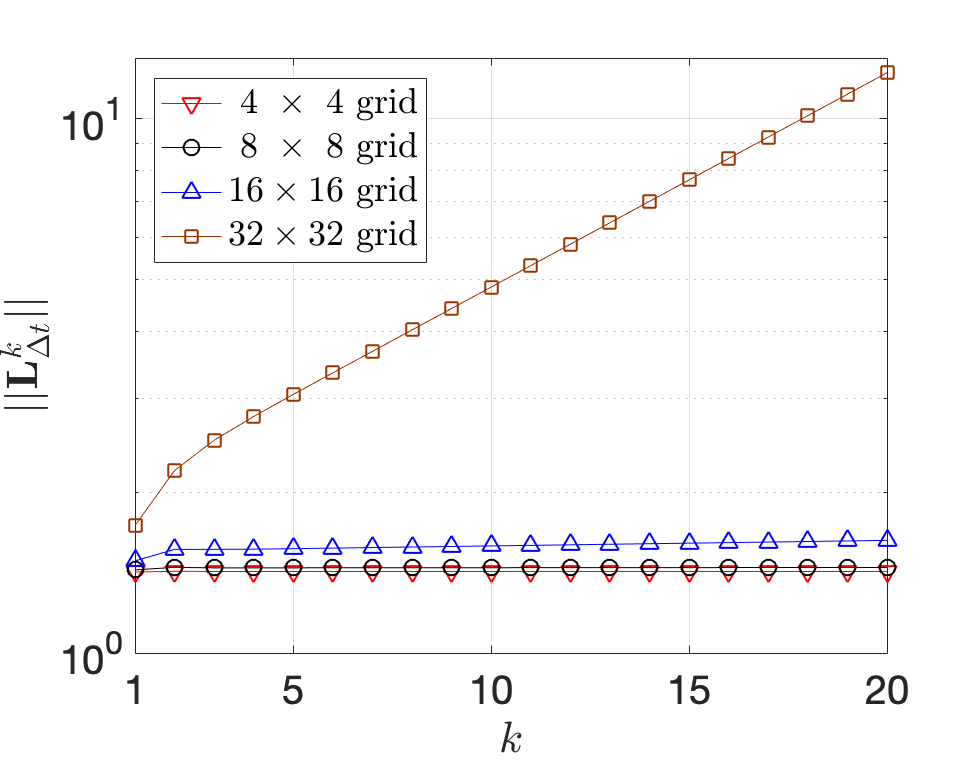}
\caption{
Two-dimensional PDE \eqref{2d-var-coef-advect}. 
Operator norm  of ${\bf L}^k_{\Delta t}$ (see Eq. \eqref{L2d})
versus $k$ for two conditionally 
stable schemes, namely the second-order centered 
finite-differences and the Fourier pseudo-spectral   
collocation schemes on a grid with $n\times n$ evenly-spaced 
points in $[0,2\pi]^2$, with  $n=4, 8, 16, 32, 64$.
It is seen that that the Fourier pseudo-spectral method 
is less stable than the finite-difference method, 
in agreement with well-known results \cite{hesthaven2007spectral}.
Here we set $\Delta t=0.0025$. }
\label{fig:2d-op-norm}
\end{figure}
\begin{figure}[ht!]
\centering
\centerline{\hspace{1.2cm}
Full-rank solution \hspace {3.2cm}
Low-rank solution}
\centerline{\line(1,0){420}}
{\rotatebox{90}{\hspace{2.4cm}\rotatebox{-90}{
\footnotesize$t=1.2$\hspace{1cm}}}}
\includegraphics[scale=0.179]{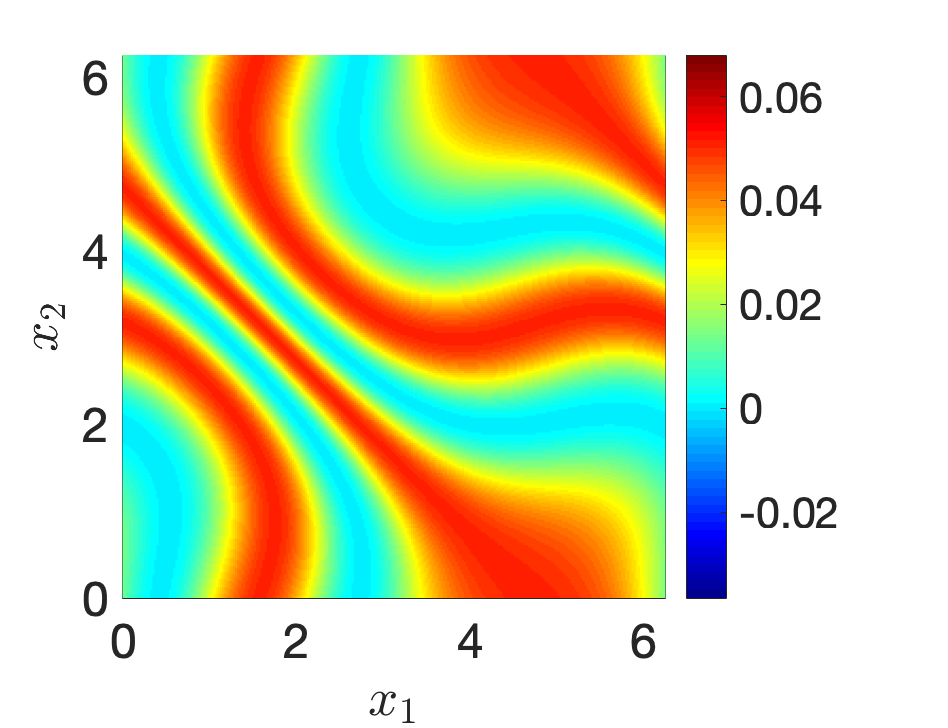}
\includegraphics[scale=0.179]{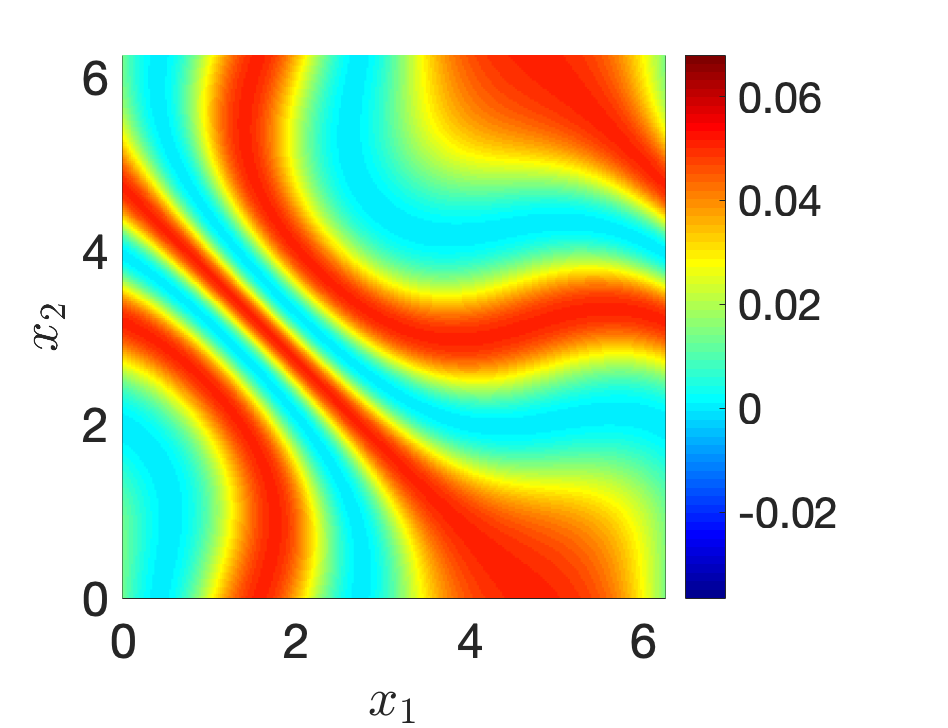}\\
{\rotatebox{90}{\hspace{2.4cm}\rotatebox{-90}{
\footnotesize$t=2.5$\hspace{1cm}}}}
\includegraphics[scale=0.179]{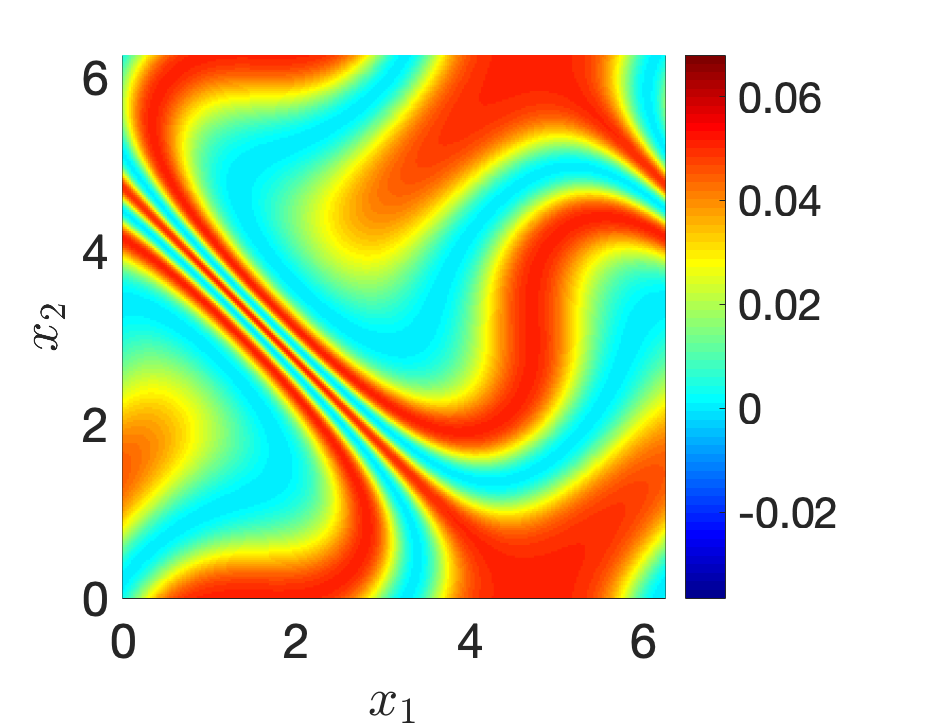}
\includegraphics[scale=0.179]{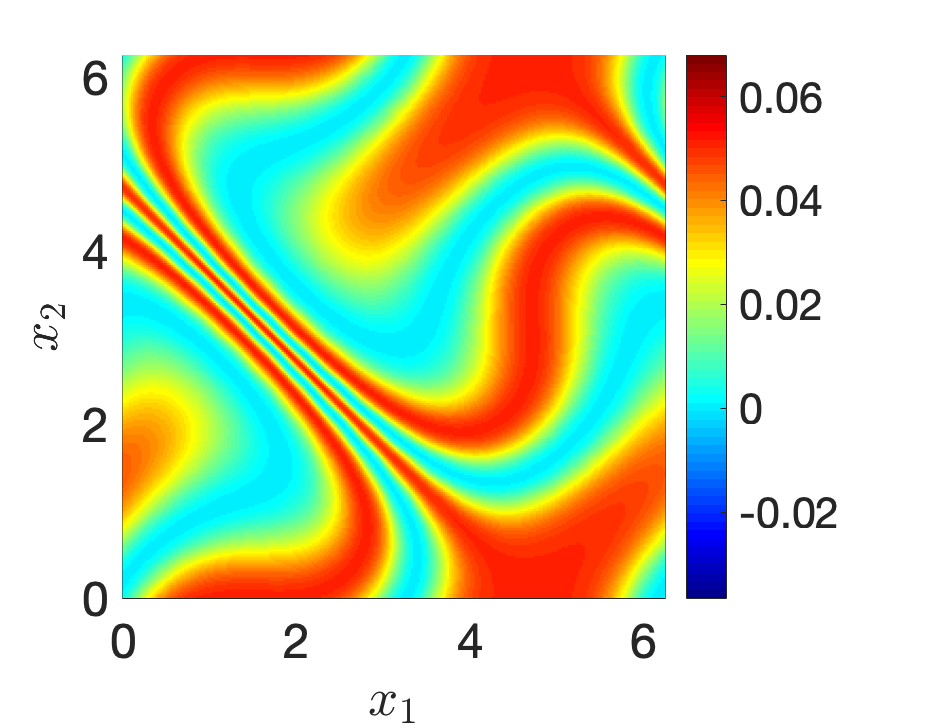}\\
{\rotatebox{90}{\hspace{2.4cm}\rotatebox{-90}{
\footnotesize$t=3.8$\hspace{1cm}}}}
\includegraphics[scale=0.179]{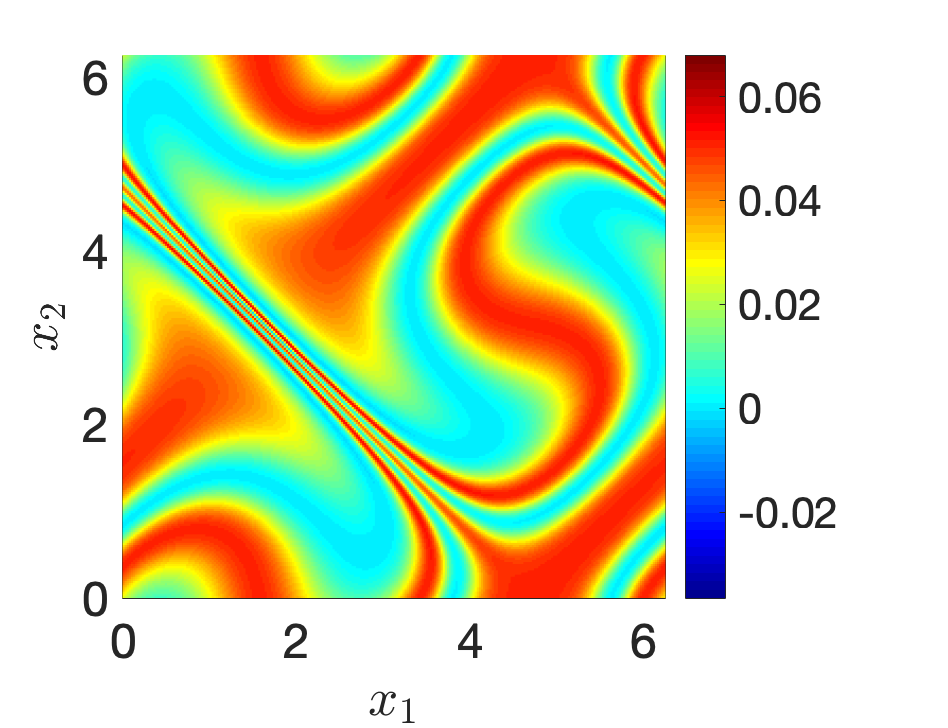}
\includegraphics[scale=0.179]{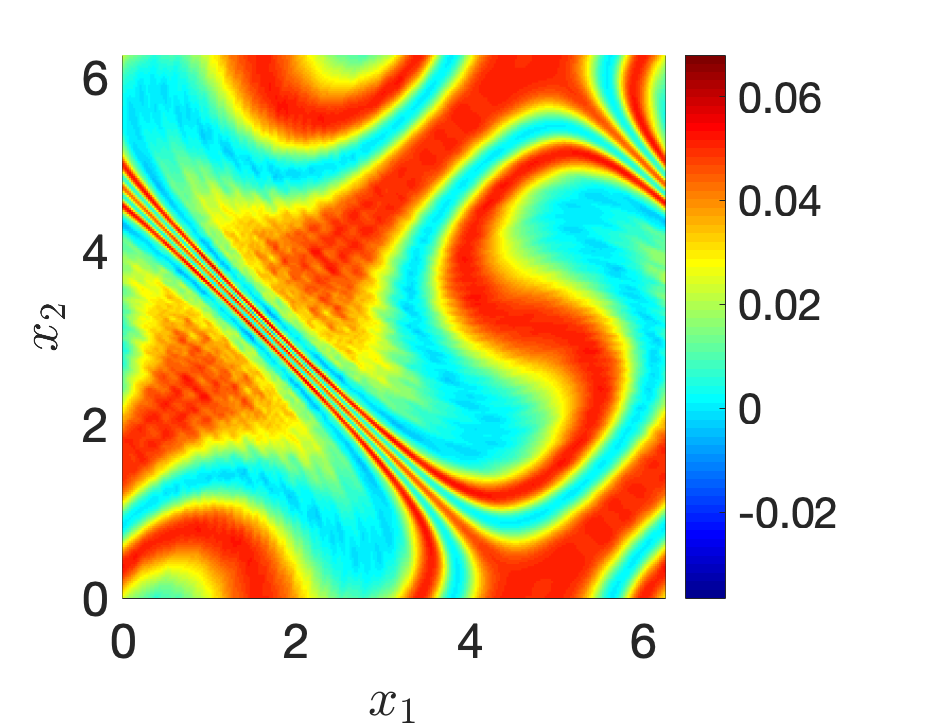}\\
{\rotatebox{90}{\hspace{2.4cm}\rotatebox{-90}{
\footnotesize$t=5.0$\hspace{1cm}}}}
\includegraphics[scale=0.179]{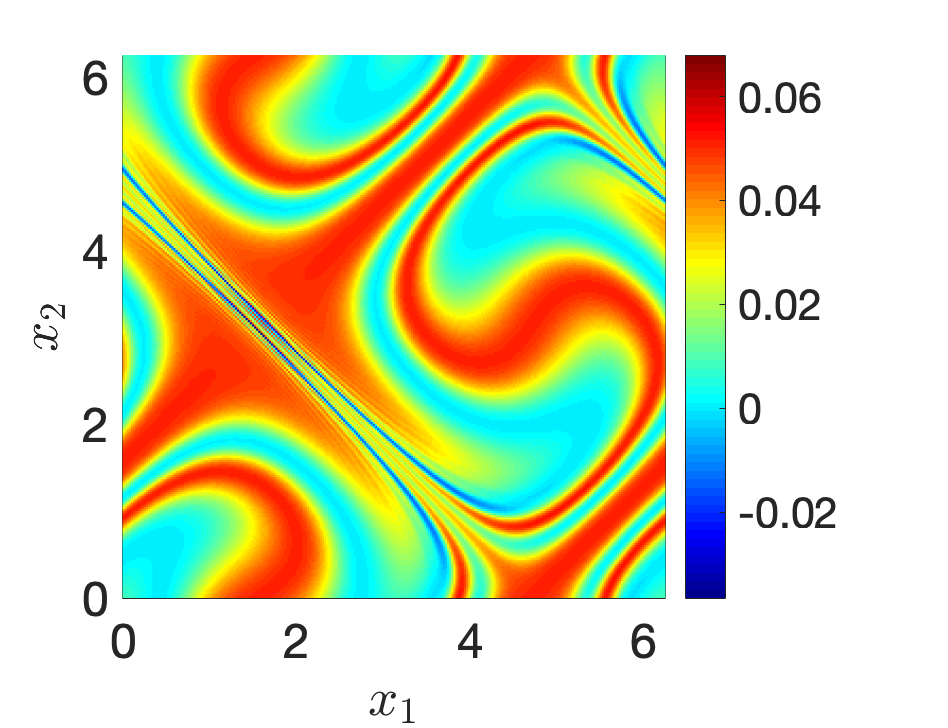}
\includegraphics[scale=0.179]{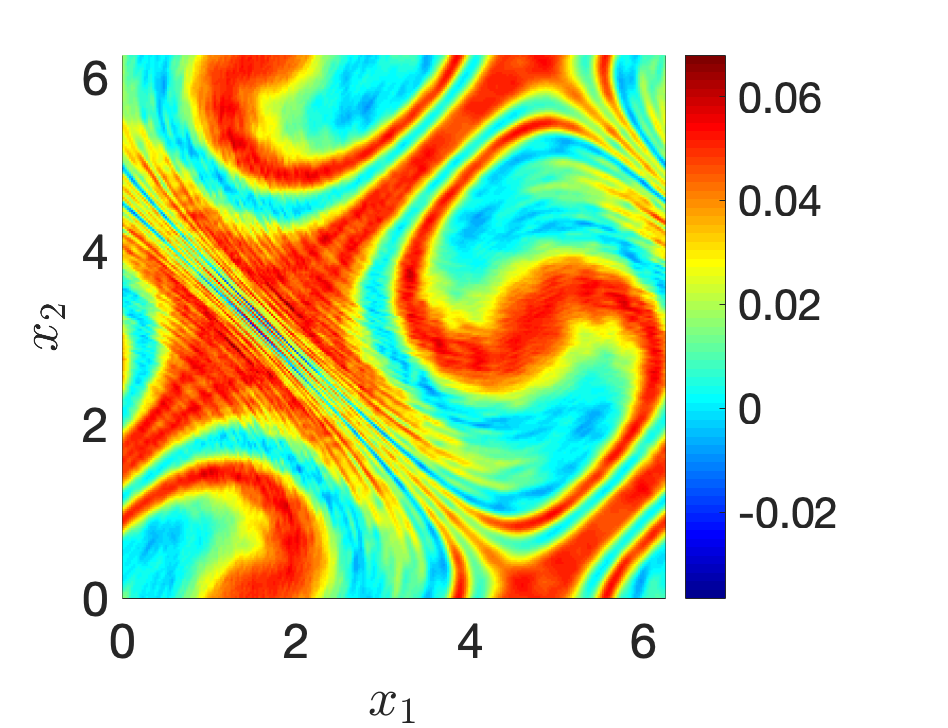}
\caption{
Numerical solution of the PDE \eqref{2d-var-coef-advect} 
using a Fourier pseudo-spectral method on a grid with 
$256\times 256$ nodes. The initial condition is chosen 
as $u_0(x_1,x_2)=\sin^2(x_1+x_2)/(2\pi^2)$.
Shown are the full-rank solution (left) and the low-rank tensor solution 
(right) we obtained by limiting the maximum rank to 64. 
It is seen that the two solutions are 
slightly different at $t=3.8$ and $t=5$, but stability 
is maintained as proven in Theorem \ref{theorem-trunc-bounded}.}
\label{fig:2d-solution-plot}
\end{figure}

In figure \ref{fig:2d-solution-plot} we plot the numerical solution 
of \eqref{2d-var-coef-advect} we obtained with an accurate 
Fourier spectral method. The initial condition is chosen 
as $u_0(x_1,x_2)=\sin^2(x_1+x_2)/(2\pi^2)$. 
It is seen that the low-rank tensor solution obtaining by 
capping the maximum rank to 64 slightly differs from the 
full rank solution at $t=3.8$  and $t=5$. However,  
but stability is maintained as proven in Theorem 
\ref{theorem-trunc-bounded}.
In figure \ref{fig:2d-rank-grow} we show 
that the solution rank grows in time. Such growth 
is determined by the fact that that solution 
to the hyperbolic problem \eqref{2d-var-coef-advect} 
becomes harder to resolve as time increases 
(see figure \ref{fig:2d-solution-plot}). In particular, 
in figure \ref{fig:2d-rank-grow} we see that just 
before applying the truncation operator, the rank of 
the iterate appears to grow at a similar rate to the 
tensor scheme with no truncation.

\begin{figure}[t]
\centerline{\includegraphics[scale=0.25]{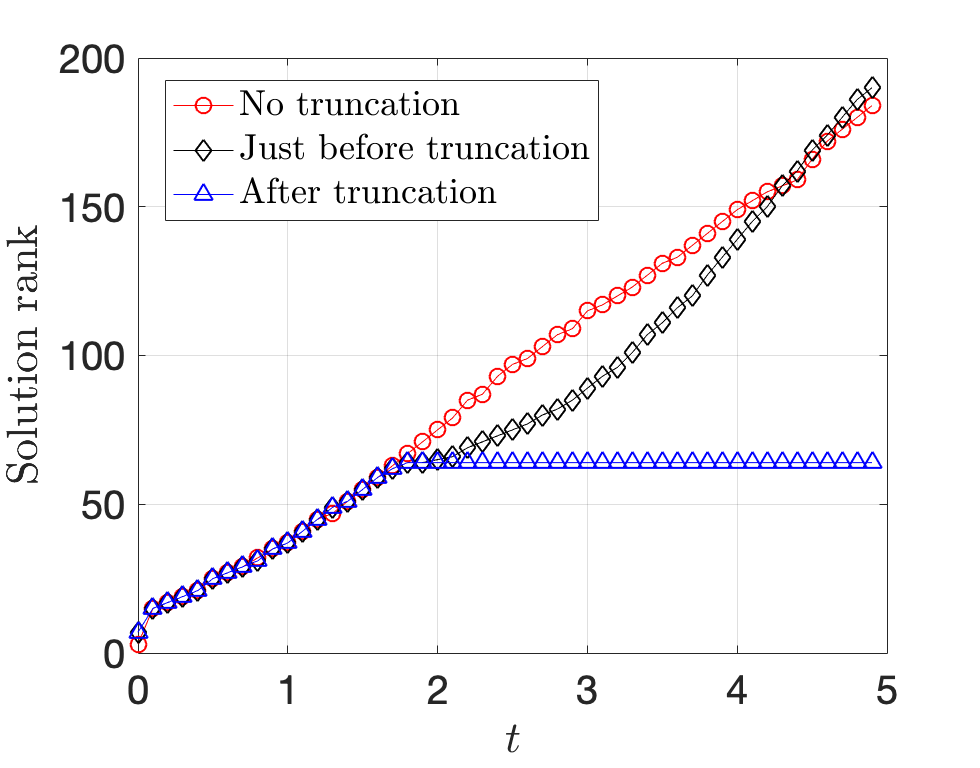}}
\caption{
Tensor rank of the numerical solution to the
PDE \eqref{2d-var-coef-advect} versus time. 
The spatial derivatives are discretized using a 
Fourier pseudo-spectral method on a grid with 
$256\times 256$ nodes. Hence the maximum 
rank of the solution tensor is $256$.  
The rank-limited solution has maximum rank set to 64.
Note that just before applying the truncation operator
in a rank-limited scheme, the rank of the iterate appears to 
grow at a similar rate to the scheme with no truncation. 
The inaccuracies of the rank-truncated solutions shown 
in figure \ref{fig:2d-solution-plot} at $t=3.8$ and $t=5$ 
are due to the fact that the solution rank is much larger
than 64 at such times (compare red and blue curves).}
\label{fig:2d-rank-grow}
\end{figure}

\subsection{Six-dimensional parabolic PDE}

We now demonstrate a higher dimensional example which is both
highly diffusive and very well approximated by
a low-rank numerical solution tensor. To this end, 
we consider the Fokker-Planck equation 
\eqref{fp-eq} with  $d=6$. For our numerical demonstration, 
we consider the following 
drift and diffusion coefficients
\begin{align*}
&{\bf f}({\bf x}) = \begin{bmatrix}
\cos(x_2) & \sin(x_3) & \cos(2x_4) & \sin(2x_5) & \cos(3x_6) & \sin(3x_1)
\end{bmatrix}^T + 6\begin{bmatrix}1 &1 &1 &1 &1 &1\end{bmatrix}^T ,\\
&
{\bf \Gamma}({\bf x})  =\begin{bmatrix}
5\cos^2(x_6) & \sin(x_1) & \cos(x_2) & \sin(x_3) & \cos(x_4) & \sin(x_5)\\
\sin(x_1)&5\cos^2(3x_3) & \sin(5x_2)& \cos(2x_1) & \sin(4x_6)&\cos(x_1)\\
\cos(x_2)&\sin(5x_2)&5\cos^2(3x_5) & \sin(2x_3) & \cos(6x_2) &\sin(x_6)\\
\sin(x_3)&\cos(2x_1)&\sin(2x_3)&5\cos^2(x_3)&\sin(x_1) &\cos(4x_4)\\
\cos(x_4)&\sin(4x_6) & \cos(6x_2) &\sin(x_1) & 5\cos^2(5x_5)& \sin(x_6)\\
\sin(x_5)&\cos(x_1)&\sin(x_6)&\cos(4x_4)& \sin(x_6)&5\cos^2(7x_6) 
\end{bmatrix}+6{\bf I}.
\end{align*}
The matrix of drift coefficients was chosen to encourage mixing between
different variables and the diffusion coefficients were chosen so that the
symmetric matrix is diagonally dominant. Therefore, $\bf \Gamma$ will
always be positive definite ensuring that \eqref{fp-eq} is a 
bounded diffusion problem.
We discretize \eqref{fp-eq} in space using the 
Fourier pseudo-spectral collocation method as 
in \ref{subsec:2d-advect}. Specifically, we construct 
an evenly-spaced grid in $[0,2\pi]^6$ with $31$ points 
in each variable. In principle this yields $31^6$ degrees of 
freedom, which require $110$ MB (Mega Bytes) of storage if 
a tensor product representation in double precision 
floating point arithmetic is utilized. However, 
if we employ a rank $r$ hierarchical Tucker tensor format 
the memory footprint is reduced to 
$[r (31\times 6)+ 4r^3+ r^2]/8$ Bytes. For instance, a 
rank 40 hierarchical Tucker tensor format in 6 dimensions 
on a grid with 31 points in each variable 
requires only $33$ kB (kilo Bytes) of storage.
By using the identity $\partial^2/\partial x_k\partial x_q
=\partial^2/\partial x_q\partial x_k$, we see that we 
can just apply the strict upper triangle part 
of $\bf \Gamma$ once and then double the result. 
The matrix $\bf G$ at right-hand side 
of \ref{mol-ode} in this case has a rather involved 
expression and therefore it is not reported here.
We supplement \eqref{fp-eq} with the initial condition in 
the initial condition 
\begin{equation}
u(0,{\bf x}) = \frac{1}{\pi^6}\prod_{j=1}^6 \sin(x_j)^2.
\label{ic}
\end{equation}
Note that \eqref{ic} is positive and it integrates to one 
over the hyper-cube  $[0,2\pi]^6$, i.e., it is a probability 
density function. 
In figure \ref{fig:time-evolve-marginal} we plot the temporal 
evolution of the marginal PDF 
\begin{equation}
u(t,x_1)=\int_{[0,2\pi]^5} u(t,{\bf x})dx_2\cdots dx_6
\end{equation}
It is seen that the PDE \eqref{fp-eq} is highly diffusive and 
it yields  a solution that can well approximated by a 
low-rank hierarchical Tucker tensor format. 
\begin{figure}[t]
\begin{center}
\centerline{Max rank 1\hspace{6cm} Max rank 5}
\includegraphics[scale=0.22]{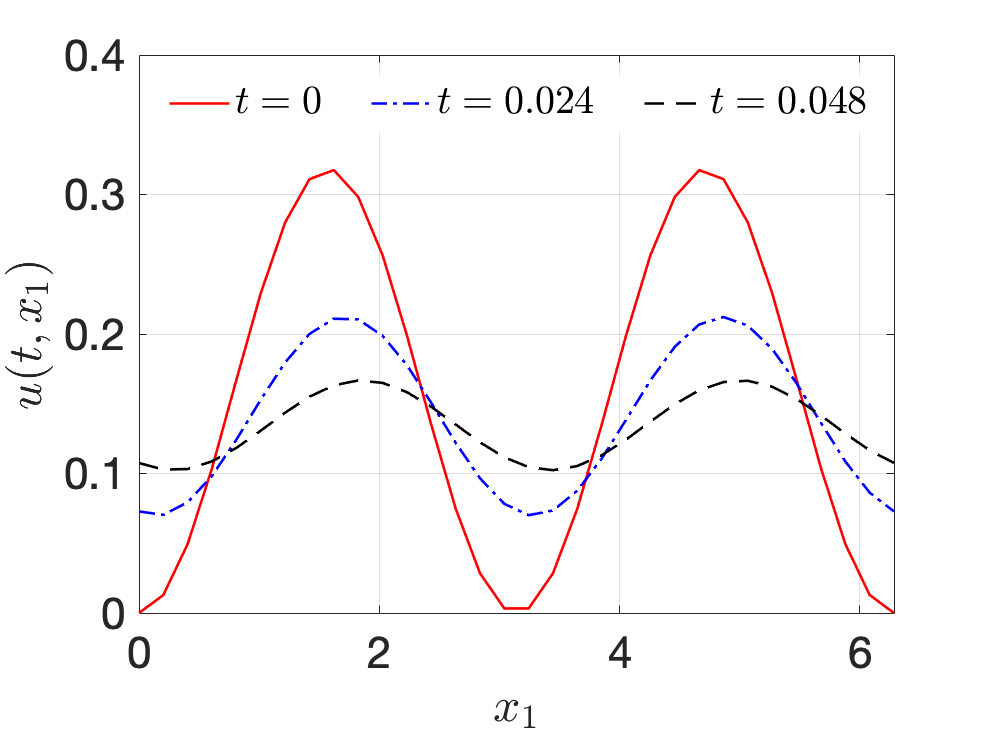}
\includegraphics[scale=0.22]{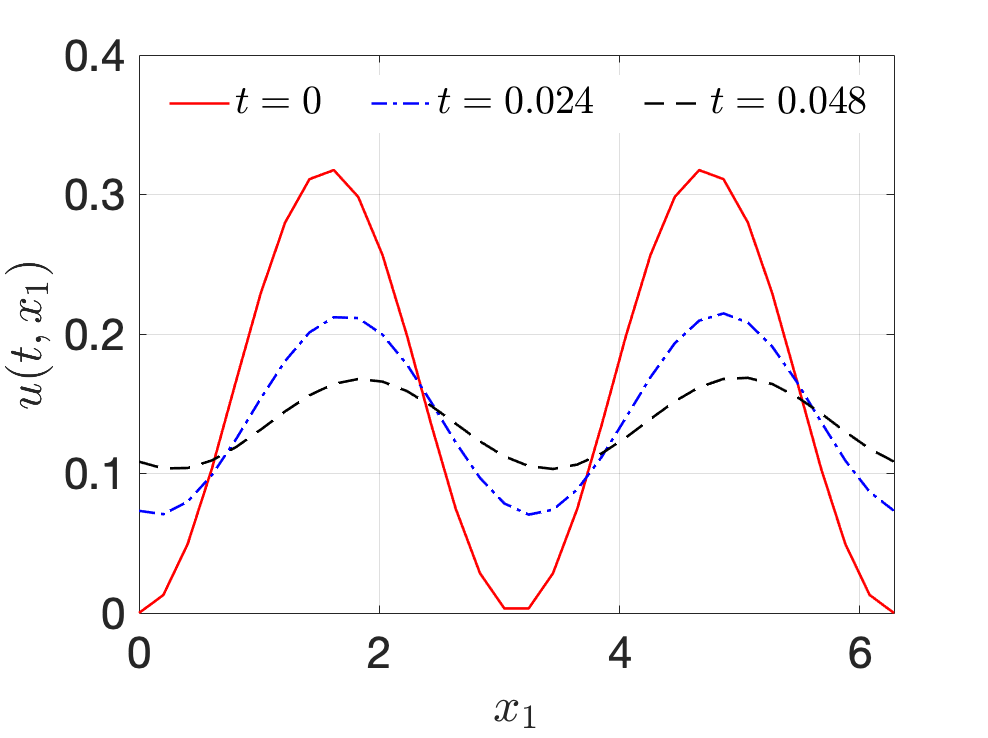}
\caption{Six-dimensional Fokker-Plank equation \eqref{fp-eq}. 
Temporal snapshots of the marginal PDF $u(t,x_1)$. 
It is seen that the system is highly diffusive and it yields  
a solution that can well approximated by a 
low-rank hierarchical Tucker tensor format.}
\label{fig:time-evolve-marginal}
\end{center}
\end{figure}
In figure \ref{fig:lemma_3_1_demo} we provide a 
numerical verification of our Lemma \ref{lemmaTr}. 
To this end, we plot the ratio
$\tau_r(t)=||{\mathfrak T}_{r}({\bf u})||_2/ ||{ \bf u}||_2$
versus time, and verify that is always smaller than one 
for any choice of rank. 
Note that both $\tau_1(t)$ and $\tau_5(t)$ are 
very close to 1, which explains why the two plots in Figure
\ref{fig:time-evolve-marginal} are visually identical.
\begin{figure}[t]
\begin{center}
\includegraphics[scale=0.25]{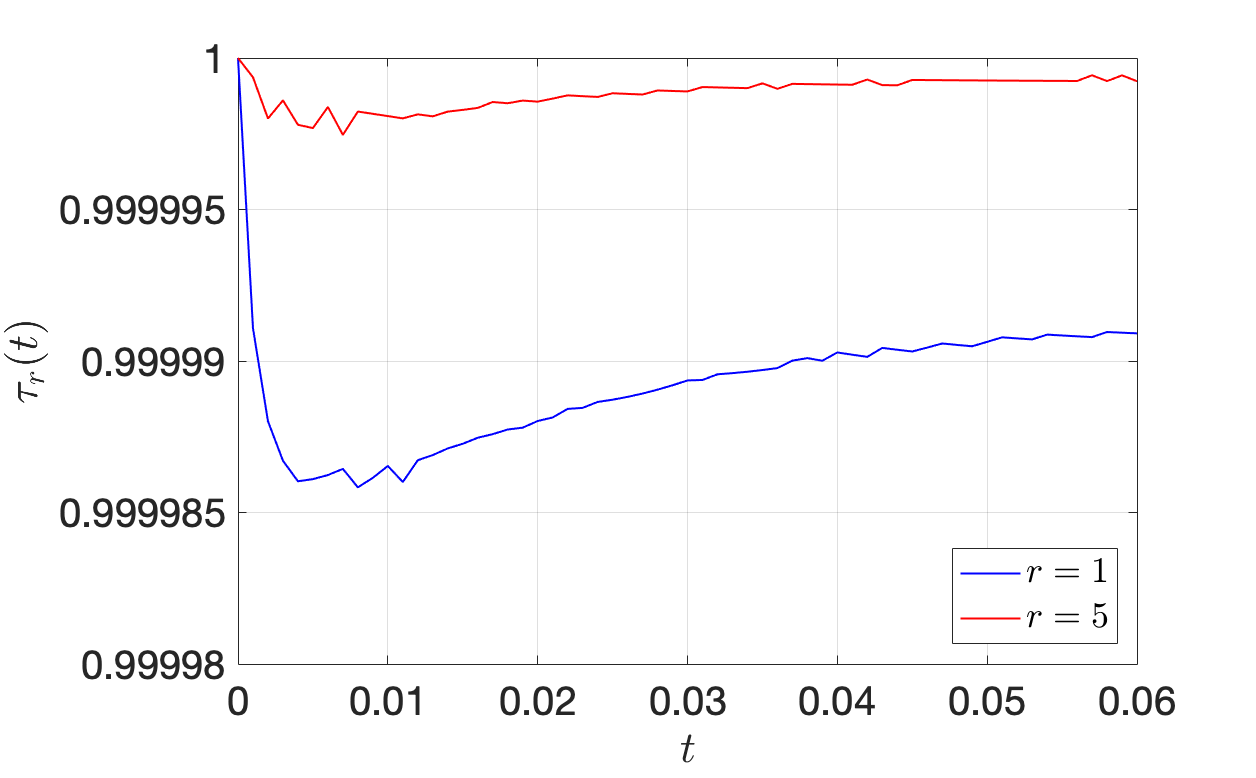}
\caption{
In Lemma \ref{lemmaTr}, we proved that the truncation operator
${\mathfrak T}_{r}$ satisfies the inequality 
$||{\mathfrak T}_{r}({\bf u})||_2 \leq||{ \bf u}||_2$.
In this figure we plot of the ratio 
\mbox{$\tau_r(t)=||{\mathfrak T}_{r}({\bf u})||_2/ ||{ \bf u}||_2$}
versus time. We see that setting max rank equal to 5 does in fact
give us an extra single digit of accuracy in the nonlinear
rank projection. However,$\tau_1(t)$ and $\tau_5(t)$ are 
very close to 1, which explains why the two plots in Figure
\ref{fig:time-evolve-marginal} are visually identical.}
\label{fig:lemma_3_1_demo}
\end{center}
\end{figure}
In our simulations we found that the hierarchical ranks 
of the HT tensor solution do not grow monotonically as 
in the two-dimensional advection problem 
(see figure \ref{fig:2d-rank-grow}). Instead,  they reach a
peak very quickly in time. This is because the solution is 
well approximated by a rank one tensor (see  figure 
\ref{fig:lemma_3_1_demo}).

\section{Summary}
\label{sec:summary}

In this paper we studied stability of linear multistep 
methods (LMM) applied to low-rank tensor discretizations of 
high-dimensional linear PDEs.
In particular, we analyzed the properties of the truncation 
operator the context of iterated maps and proved 
boundedness for a wide range of tensor formats.
This result allowed us to conclude that LMM is stable on 
low-rank tensor manifolds provided that it is stable 
on the full-rank tensor solution. 
We also showed that PDE solvers with explicit 
time-stepping may be subject to severe 
time-step restriction dependent on the 
dimension of the problem, e.g., on the number 
of independent spacial variables.
We provided demonstrative examples of 
truncated linear multistep tensor methods applied 
to variable coefficients linear hyperbolic and 
parabolic PDEs.
Further extensions of the analysis we 
developed in this paper rely on geometric 
integration methods. In particular, it was 
recently shown by Uschmajew and Vandereycken 
\cite{uschmajew2013geometry} that the 
hierarchical Tucker tensor manifold  with 
fixed ranks is smooth. This opens the possibility 
to develop rank-constrained geometric integrators 
\cite{HairerGeom} that preserve the structure 
of the manifold (see, e.g., \cite{Lubich2018,hierar}).

\vspace{0.5cm}
\noindent 
{\bf Acknowledgements} 
This research was supported by the U.S. Army Research Office  
grant W911NF1810309.

\appendix

\section{Brief review of tensor algebra}
\label{subsec:notation}
\noindent
We regard tensors \cite{Kolda} 
as elements of $\mathbb R^{n_1\times \cdots \times n_d}$ . 
Tensors are represented as multidimensional arrays and 
regarded as being multidimensional arrays to the same 
extent that linear operators or bi-linear maps may be 
regarded as matrices, i.e., up to a change in basis. 
A particular entry in a tensor ${\bf A}$ 
is denoted by brackets as ${\bf A}[i_1, \dots , i_d]$ 
where $[i_1, \dots , i_d]\in {\cal I}$ is an array of integers 
called a multi-index. The set $\cal I$ of all multi-indexes 
is called an index set. The tensor product is represented by 
the symbol $\otimes$ and computed for 
${\bf A}\in\mathbb{R}^{\mathcal I}$,
${\bf B}\in\mathbb{R}^{\mathcal J}$ using the definition
\begin{align*}
	({\bf A}\otimes {\bf B})[{i},{j}] =
	{\bf A}[{i}\ ]{\bf B}[{j}]\quad
	\forall{i}\in \mathcal I, \quad \forall {j} \in \mathcal J
\end{align*}
\noindent
Both Kronecker and tensor products result in the same 
array storage in column major format. Their difference 
only lies in what index lengths are specified, i.e., 
$\mathbb{R}^{n_1\cdot n_1}\simeq \mathbb{R}^{n_1\times n_1}$
within the computer when storing an element as an array.

\paragraph{\bf  Matricization of a tensor}
A matricization is specific type of permutation on the components of 
${\bf A}\in \mathbb R^{\mathcal I}$ and its indexes such 
that the resulting tensor is a 2-dimensional array \cite{hackbusch2012tensor}, i.e. a matrix. 
Specifically, let $\cal D$ be an index set, 
$\rho \subseteq {\cal D}$ be an 
ordered subset which will define the rows,  
$\kappa = {\cal D} - \rho $ be an ordered subset 
of all the numbers not in $\rho$, 
${\bm\sigma}_\rho$ be the permutation on sets of 
size $d$ defined by
\begin{align*}
	\bm\sigma_\rho(1,\dots,d) =
		(\rho_1,\dots,\rho_r,\kappa_1,\dots,\kappa_c)
\end{align*}
where $r+c=d$ are the numbers of row indexes and column indexes,
respectively. The $\rho$ mode matricization of $\bf A$ is 
defined by
\begin{align*}
	{\bf A}^{(\rho)}[{\rho},{\kappa} ] = 
	{\bf A}[{i}], \qquad\text{where}\qquad 
	[\rho,\kappa] =  \bm\sigma_\rho(i)
		 \quad \forall
		{i}\in \mathcal I\quad \text{(index set)}.
	\end{align*}
Here $\rho,\kappa$ dermine the row and column of a 
matrix given by applying a row or column major ordering 
scheme to the multi-indexes. Applying the inverse of the 
aforementioned permutation defines a de-matricization, i.e., 
a transformation  back to a tensor. An important case of 
matricization is the vectorization which corresponds to 
listing all the entries of $\bf A$ in a single 
column vector.

\paragraph{\bf $\rho$-mode product}
Let ${\bf A}\in\mathbb R^{\mathcal I}$. Let 
${\bf A}^{(\rho)}$ be a matricization of ${\bf A}$ with 
$R$ rows. Let ${\bf L}\in \mathbb{R}^{M\times R}$ be a matrix.
The $\rho$-mode product between ${\bf L}$ and ${\bf A}$, 
denoted as ${\bf L}\circ_\rho {\bf A}$, is defined 
\cite{grasedyck2010hierarchical} as the tensor satisfying
\begin{align*}
	({\bf L}\circ_\rho {\bf A})^{(\rho)} = {\bf L}{\bf A}^{(\rho)}
\end{align*}
This is the action of multiplying into the $\rho$ index or 
indexes and then summing. The result of the multiplication 
is achieved by applying the de-matricization permutation. 
Also note that if $\otimes$ denotes the
Kronecker product, then 
\begin{align*}
{\bf L}\circ_\lambda (R \circ_{\rho} {\bf A}) \simeq (R \otimes L) {\bf A}^{(\lambda\  \cup\  \rho)}
\label{circ}
\end{align*}
so long as $\lambda\  \cap\  \rho$ is empty. Here,  
$\simeq$ means ``up to matricization permutation''. This property 
is used in Algorithm 941 \cite{kressner2014algorithm} to compute $\rho$-mode products.

\section{Hierarchical Tucker tensor format}
\label{sec:htformat}
\noindent
The Hierarchical Tucker (HT) tensor format is a decomposition of
a tensor obtained by recursively splitting a tensor space into products 
of pairs of spaces along a binary tree \cite{grasedyck2010hierarchical}. 
It was originally introduced by Hackbush and K\"{u}hn in \cite{Hackbusch2009} to 
mitigate the curse of dimensionality and 
storage requirements in the numerical representation 
of the solution to high-dimensional problems Hereafter we 
give a brief overview of the HT tensor format. 
The interested reader is referred to \cite{grasedyck2010hierarchical,grasedyck2018distributed,hackbusch2012tensor,kressner2014algorithm} 
 
 \paragraph{\bf Dimension Tree}
A dimension tree $\mathcal T_d$ with $d\in\mathbb N$ is a tree with an array
of integers associated with each node. The root node is defined as the node
with the array $[1,\dots,d]$. If $t\in \mathcal T_d$ is a node on 
the dimension tree, then the children of $t$ must by definition have 
arrays which can be concatencated to form the array at $t$. 
If a tree node has an array with more than one element, it must 
have children. A node with a singleton array is called a leaf. 
If a node is not a leaf and not the root, it is called interior.
Our definition of dimension tree a slight modification of the definition given in  
\cite{grasedyck2010hierarchical,uschmajew2013geometry,da2015optimization}. 
In particular, we allow for non-binary dimension trees such 
as that of the Tucker format \cite{hackbusch2012tensor,grasedyck2010hierarchical},
which looks like a star network topology if the definition above is applied.
Of course, a dimension tree is binary if each non-leaf node has two 
children called ``left'' and ``right''. This can always be accomplished 
by bisecting the array at a given node. If the array has an odd number 
of elements, give the left child one more than the right.
The HT tensor format corresponds to a binary dimension tree with
a matrix (2-tensor) at its root, 3-tensors at the interior nodes, and
matrices at the leaves. The 3-tensors are called transfer tensors.
If the columns of a leaf matrix are a independent, said matrix is
called a leaf frame.

\paragraph{\bf Hierarchical size of a dimension tree}
Let $\mathcal T_d$ be a dimension tree. A hierarchical size associated with
$\mathcal T_d$ is a mapping from the nodes to the 
natural numbers $\mathbb N$. We denote the size at 
node $t\in\mathcal T_d$ by $r_t$.
The definition of hierarchical size is meant to express number of entries
stored for each multidimensional array. It corresponds to a specific notion of
rank of certain matricizations when finding an estimate of a particular
${\bf A}\in {\mathbb R}^{\cal I}$ given a dimension tree. 
Greater detail is given in \cite{grasedyck2010hierarchical}.

\paragraph{\bf Memory storage format of an HT tensor}
Let $\mathcal I = \mathcal{I}_1\times \dots \times \mathcal{I}_d$
be an index set with dimension $d$ and let
$N_\mu=\max(\mathcal I_\mu)$. Let $\mathcal T_d$ be a 
binary dimension tree with hierarchical sizes $r_t$. 
Let $\rho(t)$ denote the right child of $t$ and let 
$\lambda(t)$ denote the left child. The sizes of the 
tensors on $\mathcal T_d$ in the HT format are:
    \begin{enumerate}
        \item At the root node, $\mathcal D = \{1,\dots,d\}$,
        the matrix is denoted by ${\bf B}_{\mathcal D}$ and its size is
        $r_{\lambda(\mathcal{D})} \times r_{\rho(\mathcal{D})}$.
        We think of ${\bf B}_{\mathcal D}$ as being a bilinear form
        from $\mathbb{R}^{r_{\lambda(\mathcal{D})}}\times 
        \mathbb{R}^{r_{\rho(\mathcal{D})}}$ to $\mathbb R$.
        
        \item In the interior nodes, $t\in\mathcal T_d$, the
        transfer tensors are written as ${\bf B}_t$ and their sizes
        are $r_{\lambda(t)} \times r_{\rho(t)}\times r_{t}$.
        We think of ${\bf B}_t$ as a bilinear map from
        $\mathbb R^{r_{\lambda(t)}} \times \mathbb R^{r_{\rho(t)}}$
        to $\mathbb R^{ r_{t}}$.
        
        \item At the leaves, $\{\mu\}\in\mathcal T_d$, the matrices
        are written as $U_\mu$ and their sizes are
        $N_\mu \times r_{\{\mu\}}$.
    \end{enumerate}

\noindent
Note that the hierarchical sizes are not necessarily 
ranks of any of the tensors defined here.
Further detail regarding the multilinear algebra of
the HT format, including change of basis rules,
is described in \cite{uschmajew2013geometry}.

\paragraph{\bf Hierarchical rank}
Let ${\bf A}\in \mathbb{R}^{\cal I}$ be a tensor corresponding to 
the dimension tree ${\cal T}_d$. The hierarchical rank 
of ${\bf A}$ is the set of hierarchical sizes defined 
by $r_t = \text{rank}({\bf A}^{(t)})$ for every non-root 
$t\in{\cal T}_d$. For the root, we say the
hierarchical rank $r_{t_{root}} = 1$ since the matrix there
can be seen as a $r_{\lambda}\times r_{\rho}\times 1$ tensor.
It is shown in \cite{grasedyck2010hierarchical} that one can
always express a tensor in the HT format using hierarchical
sizes equal to the hierarchical ranks of $\bf A$. 

\paragraph{\bf Hierarchical  truncation}
Hierarchical truncation is a generalization of the
notion of low-rank SVD-base matrix approximation.
This operation is one of the core topics of low-rank tensor 
approximations \cite{grasedyck2010hierarchical}. 
Let ${\bf A}\in \mathbb{R}^{\cal I}$ be a tensor corresponding to 
the dimension tree ${\cal T}_d$, and 
let ${\cal T}_d^l$ denote all nodes which are in
layer $l$ of the tree, i.e., the number of branch
traversals it takes to reach the root. 
The truncation of $\bf A$ is defined as
\begin{align*}
        {\mathfrak T}_r({\bf A}) &=
        \prod_{t\in{\cal T}_d^p}{\bf P}_t
        \cdots
        \prod_{t\in{\cal T}_d^1}{\bf P}_t{\bf A}
    \end{align*}
    where every ${\bf P}_t$ is an orthogonal projection
    formed using $t$-mode matricizations of $\bf A$.
    If $\bf A$ is in the HT format, then the
    resulting truncated tensor may also be expressed
    in the HT format with hierarchical sizes
    defined by the matrix ranks of all ${\bf P}_t$.

\subsection{Parallel implementation}
   Our complete implementation of the HTucker format in parallel is
    found in \cite{rodgers2019htuckermpi}.
    Each tree node is associated with a different compute node
    in a manner similar to \cite{grasedyck2018distributed}.
    Each processor also stores integers to indicate if it is, root,
    leaf, or interior as well as what its parent/children are.
    All nodes the Hierarchical Tucker tensor are stored as C++
    objects which are instantiated in parallel. Each instance
    of an HTucker node object communicates with the other nodes
    on the tree through the Open MPI message passing library.
    Matrices and tensors are passed through the use of a 
    memory format message encoded in long integers
    followed by the components of the array passed as double
    precision floating point numbers. Coding with the library
    in a driver file largely behaves the same as coding in serial,
    with memory handled by the package in parallel. Computation
    on each node is split into cases, where a core is told what to
    do based on if it is root, interior, or a leaf.
    This allows for many algorithms to be implemented easily
    in HTucker format, since tree traversal algorithms can be avoided
    by simply telling the cores to behave in one of three cases.

\bibliography{bram-refs}
\bibliographystyle{plain}

\end{document}